\documentclass[12pt,a4paper]{amsart}
\usepackage[T1]{fontenc}
\usepackage[english]{babel}
\usepackage{amsthm,amssymb,amsfonts,mathrsfs,amsmath,fancyhdr}
\pagestyle{myheadings}
\usepackage{enumitem}
\usepackage{scalerel}
\usepackage{rotating}

\usepackage[maxbibnames=99, backend=biber,style=numeric, backref, backrefstyle=none, maxalphanames=4]{biblatex}
\addbibresource{Bibliography.bib}

\usepackage[usenames,dvipsnames,usenames,dvipsnames,svgnames,table]{xcolor}
\definecolor{newblue}{RGB}{0,100,200}
\usepackage[nodisplayskipstretch]{setspace}\setstretch{1}
\usepackage[unicode,pdftex, bookmarks, linktocpage=true]{hyperref}
\hypersetup{hypertexnames=false, colorlinks=true, citecolor=blue, linkcolor=blue, urlcolor=magenta, pdfpagemode=UseNone , breaklinks=true}
\usepackage[babel]{csquotes}
\usepackage[left=2.8cm, right=2.8cm,top=2.6cm,bottom=2.6cm]{geometry}
\usepackage[all]{xy}

\usepackage{orcidlink}

\usepackage{dsfont}
\usepackage{bookmark}

\usepackage{tikz}
\usetikzlibrary{matrix,arrows,calc,decorations,patterns,decorations.markings}
\usepackage{tikz-cd}

\usepackage{bm}

\usepackage{etoolbox}
\patchcmd{\section}{\normalfont}{\normalfont\normalsize}{}{}

\usepackage{bbm}
\usepackage{tkz-euclide}
\tikzset{
  dotted/.style={pattern=dots,pattern color=#1},
  dotted/.default=black
}
\tikzset{
  fdotted/.style={pattern=crosshatch dots,pattern color=#1},
  fdotted/.default=black
}
\tikzset{
  scopedlines/.style={pattern=north east lines,pattern color=#1},
  scopedlines/.default=black
}
\tikzset{
  hrlines/.style={pattern=horizontal lines,pattern color=#1},
  hrlines/.default=black
}
\newcommand*{\DashedArrow}[1][]{\mathbin{\tikz [baseline=-0.25ex,-latex, dashed,#1] \draw [#1] (0pt,0.5ex) -- (1.3em,0.5ex);}}
\usepackage{microtype}

\usepackage{mathtools}

\theoremstyle{plain}
\newtheorem{lem}{Lemma}[section]

\theoremstyle{definition}
\newtheorem{defn}[lem]{Definition}

\theoremstyle{definition}
\newtheorem{rmk}[lem]{Remark}

\theoremstyle{plain}
\newtheorem{cor}[lem]{Corollary}

\theoremstyle{plain}
\newtheorem{prop}[lem]{Proposition}

\theoremstyle{plain}
\newtheorem{thm}[lem]{Theorem}

\theoremstyle{plain}
\newtheorem{conj}[lem]{Conjecture}

\theoremstyle{definition}
\newtheorem{ex}[lem]{Example}

\newenvironment{proof1}{\vspace{0.2cm}\paragraph{\bf\textit{Proof  of Theorem \ref{thmden}:}}}{\hfill$\blacksquare$ \medskip}

\newenvironment{proof2}{\vspace{0.2cm}\paragraph{\bf\textit{Proof  of Corollary \ref{cor1}:}}}{\hfill$\blacksquare$ \medskip}

\newenvironment{proof3}{\vspace{0.2cm}\paragraph{\bf\textit{Proof  of Corollary \ref{thm2}:}}}{\hfill$\blacksquare$ \medskip}

\newenvironment{proof4}{\vspace{0.2cm}\paragraph{\bf\textit{Proof  of Corollary \ref{corHoring}:}}}{\hfill$\blacksquare$ \medskip}

\newenvironment{proof5}{\vspace{0.2cm}\paragraph{\bf\textit{Proof  of Proposition \ref{thmden2}:}}}{\hfill$\blacksquare$ \medskip}

\newenvironment{alternativeproof}{\vspace{0.2cm}\paragraph{\bf\textit{Alternative proof of Theorem \ref{thmden}:}}}{\hfill$\blacksquare$ \medskip}

\newcommand{\EffS}{\overline{\mathrm{Eff}(S)}}
\newcommand{\EffX}{\overline{\mathrm{Eff}(X)}}

\newcommand{\MovX}{\overline{\mathrm{Mov}(X)}}
\newcommand{\Bir}{\mathrm{Bir}(X)}

\title[\scriptsize Pseudo-effective classes on IHS manifolds]{{Pseudo-effective classes on projective irreducible holomorphic symplectic manifolds}}
\author[Francesco Antonio Denisi]{ Francesco Antonio Denisi \orcidlink{0000-0002-1128-7890}}

\address{Université de Lorraine, Institut Elie Cartan de Lorraine, F-54506 Vandœuvre-lès-Nancy Cedex, France}
\curraddr{Université Paris Cité and Sorbonne Université, CNRS, IMJ-PRG, F-75013 Paris, France}
\email{denisi@imj-prg.fr}

\begin{document}
\begin{abstract}
We show that Kovács' result on the cone of curves of a K3 surface generalizes to any projective irreducible holomorphic symplectic manifold $X$. In particular, we show that if $\rho(X)\geq 3$, the pseudo-effective cone $\overline{\mathrm{Eff}(X)}$ is either circular or equal to $\overline{\sum_{E}\mathbf{R}^{\geq 0} [E]}$, where the sum runs over the prime exceptional divisors of $X$. The proof goes through hyperbolic geometry and the fact that (the image of) the Hodge monodromy group $\mathrm{Mon}^2_{\mathrm{Hdg}}(X)$ in $\text{O}^+(N^1(X))$ is of finite index. If $X$ belongs to one of the known deformation classes, carries a prime exceptional divisor $E$, and $\rho(X)\geq 3$, we explicitly construct an additional integral effective divisor, not numerically equivalent to $E$, with the same monodromy orbit as that of $E$. To conclude, we provide some consequences of the main result of the paper, for instance, we obtain the existence of uniruled divisors on certain primitive symplectic varieties.
\end{abstract}
\maketitle

\section{Introduction}
An irreducible holomorphic symplectic (IHS) manifold $X$ is a simply connected compact Kähler manifold such that $H^0(X,\Omega^2_X) \cong \mathbf{C} \sigma $, where $\sigma$ is a holomorphic symplectic form. They are the higher dimensional analogs of complex K3 surfaces, which are the IHS manifolds of dimension $2$, and one of the three building blocks of compact Kähler manifolds with vanishing first (real) Chern class. At present, only the K3$^{[n]}$ and $\mathrm{Kum}_n$ deformation classes, for any $n \in \mathbf{N}$ (see \cite{Beau}), and the OG10, OG6 deformation classes, discovered by O'Grady (see \cite{OGrady1} and \cite{OGrady2} respectively) are known. Thanks to the existence of a quadratic form $q_X$ on $H^2(X,\mathbf{C})$, known as the Beauville-Bogomolov-Fujiki (BBF) form, the structure of the pseudo-effective cone of a projective IHS manifold $X$ is similar to that of a smooth complex projective surface. Indeed, for a surface $S$ we have 
\[
\EffS=\overline{\mathscr{C}_S}+\sum_{C \in \mathrm{Neg}(S)}\mathbf{R}^{\geq 0} [C]
\]
(see \cite[Theorem 4.13]{Kollar}), where $\mathrm{Neg}(S)$ is the set of irreducible and reduced curves of negative self-intersection in $S$, and $\mathscr{C}_S$ is the positive cone of $S$, i.e. the connected component of the set $\{\alpha \in N^1(S)_{\mathbf{R}} \; | \; \alpha^2>0\}$ containing the ample cone $\mathrm{Amp}(S)$; while for a projective IHS manifold $X$ we have 
\[
\EffX= \overline{\mathscr{C}_X} \cap N^1(X)_{\mathbf{R}}+\sum_{E \in \mathrm{Neg}(X)}\mathbf{R}^{\geq 0}[E],
\] 
where $\mathrm{Neg}(X)$ is the set of prime exceptional divisors in $X$, i.e. the prime divisors with negative BBF square (see \cite[Corollary 3.5]{Den}), and $\mathscr{C}_X$ is the positive cone of $X$, i.e. the connected component of the set $\{\alpha \in H^{1,1}(X,\mathbf{R}) \; | \; q_X(\alpha)>0\}$ containing the K\"ahler cone $\mathscr{K}_X$. We are mainly interested in the algebraic part of $\mathscr{C}_X$ (i.e. $\mathscr{C}_X\cap N^1(X)_{\mathbf{R}}$). With some abuse of notation, from now on we will denote $\mathscr{C}_X\cap N^1(X)_{\mathbf{R}}$ by $\mathscr{C}_X$.
For a projective K3 surface (over an arbitrary algebraically closed field of any characteristic), Kovács obtained a refinement for the structure of the pseudo-effective cone. In particular, he proved the following.

\begin{thm}[\cite{Kov}, Theorem  1.1]\label{thmkov}
Let $S$ be a K3 surface of Picard number at least 3 over an algebraically closed field of arbitrary characteristic. Then either $\mathrm{Neg}(S)=\emptyset$ and $\overline{\mathrm{Eff}(S)}=\overline{\mathscr{C}_S}$ is circular, or \[
\overline{\mathrm{Eff}(S)}=\overline{\sum_{C \in \mathrm{Neg}(S)}\mathbf{R}^{\geq 0} [C]},
\] 
where $\mathrm{Neg}(S)$ is the set of smooth rational curves contained in $S$.
\end{thm}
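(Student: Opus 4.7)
The plan is to deduce the theorem from the decomposition recalled in the introduction,
\[
\overline{\mathrm{Eff}(S)} = \overline{\mathscr{C}_S} + \sum_{C \in \mathrm{Neg}(S)}\mathbb{R}^{\geq 0}[C].
\]
If $\mathrm{Neg}(S) = \emptyset$, this immediately forces $\overline{\mathrm{Eff}(S)} = \overline{\mathscr{C}_S}$, which is circular because the intersection form on $N^1(S)_{\mathbb{R}}$ is non-degenerate of signature $(1, \rho(S)-1)$ with $\rho(S)-1 \geq 2$. This disposes of the first alternative.

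Assume now that $\mathrm{Neg}(S) \neq \emptyset$ and set $N := \overline{\sum_{C \in \mathrm{Neg}(S)}\mathbb{R}^{\geq 0}[C]}$. By the decomposition above it suffices to prove $\overline{\mathscr{C}_S} \subseteq N$. Because $N$ is a closed convex cone and $\overline{\mathscr{C}_S}$ is the closed convex hull of its null boundary $\partial\mathscr{C}_S := \{x \in \overline{\mathscr{C}_S} : x^2 = 0\}$ (in Lorentzian signature, every future timelike vector is a sum of two future null ones), this reduces further to the inclusion $\partial\mathscr{C}_S \subseteq N$.

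I would prove $\partial\mathscr{C}_S \subseteq N$ in two steps. First, every rational isotropic class $\alpha \in \overline{\mathscr{C}_S}$ lies in $N$: Riemann--Roch on $S$ gives $h^0(\alpha) \geq 2$, since $\alpha \cdot H > 0$ for an ample $H$ (forcing $h^0(-\alpha) = 0$) and $\chi(\alpha) = 2$. Thus $\alpha$ is effective, and its Zariski decomposition $\alpha = P + M$ is defined, with $P$ nef, $M$ a non-negative $\mathbb{Q}$-combination of smooth rational curves of negative-definite support, and $P \cdot M = 0$. From $0 = \alpha^2 = P^2 + M^2$ together with $P^2 \geq 0$ and $M^2 \leq 0$, both summands vanish; the negative-definiteness of the support of $M$ then forces $M = 0$. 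Hence $\alpha = P$ is a nef isotropic class, which by the Piatetski-Shapiro--Shafarevich criterion is the class of an elliptic fibration $S \to \mathbb{P}^1$; since $\chi_{\mathrm{top}}(S) = 24$, this fibration has singular fibers, each of which exhibits $\alpha$ as a non-negative $\mathbb{Z}$-combination of $(-2)$-curves, placing $\alpha \in N$. Second, I would invoke density of rational isotropic rays in $\partial\mathscr{C}_S$ together with closedness of $N$ to extend the conclusion to all of $\partial\mathscr{C}_S$.

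The main obstacle is that final density step. For $\rho(S) \geq 5$, Meyer's theorem guarantees that $\mathrm{Pic}(S)$ (indefinite, rank $\geq 5$) is isotropic, and the $\mathrm{O}(\mathrm{Pic}(S))$-orbit of any isotropic vector is dense on the real quadric $\partial\mathscr{C}_S$. For $\rho(S) \in \{3,4\}$ the Picard lattice may be rationally anisotropic, and one has to handle irrational boundary directions by a different mechanism: using the Weyl group $W = \langle r_C : C \in \mathrm{Neg}(S)\rangle$ acting by isometries on $N^1(S)_{\mathbb{R}}$, one shows that when $W$ is non-elementary its limit set on $\partial\mathscr{C}_S$ is the entire boundary, so $W$-translates of $[C_0]$ accumulate onto every isotropic direction, which is then in $N$ by closedness. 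The remaining finite-$W$ situations --- K3 surfaces with polyhedral nef cone classified by Nikulin --- must be handled individually via the Vinberg diagram of $\mathrm{Pic}(S)$, verifying that the finitely many rational curve rays already enclose the positive cone.
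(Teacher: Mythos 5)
Your proposal takes a genuinely different route from the one that works, and it has several real gaps. Note first that the paper does not prove Theorem \ref{thmkov} at all: it is quoted from Kov\'acs, and the paper instead adapts his strategy (the Pell-equation construction of Lemma \ref{tecnolemma} together with the convexity Lemma \ref{lemKov}) to prove the higher-dimensional Theorem \ref{thmden}. So the comparison below is with Kov\'acs' argument as reflected in that adaptation.

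Concretely: \emph{(i)} In your Zariski decomposition step, $0=\alpha^2=P^2+M^2$ with $P^2\geq 0$ and $M^2\leq 0$ does \emph{not} force both summands to vanish. For instance, if $S$ carries an elliptic fiber class $f$ and a $(-2)$-curve $C$ with $f\cdot C=k>0$, then $\alpha=f+kC$ is integral, isotropic, lies in $\overline{\mathscr{C}_S}$, meets $C$ negatively, and has $M\neq 0$ and $P^2>0$; so not every rational isotropic class is nef, and your reduction breaks already there. \emph{(ii)} Even for a primitive nef isotropic class, the induced (quasi-)elliptic fibration may have only \emph{irreducible} singular fibers (nodal or cuspidal rational curves), which are not smooth rational curves and hence not in $\mathrm{Neg}(S)$; in that case the fiber class is only a \emph{limit} of $(-2)$-rays (e.g.\ of sections and multisections of growing degree), not a non-negative integral combination of $(-2)$-curves, so the claim that each singular fiber places $\alpha$ in $N$ fails. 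This is exactly the delicate point of the theorem. \emph{(iii)} The density step is not an argument: for $\rho(S)\in\{3,4\}$ the Picard lattice can be $\mathbb{Q}$-anisotropic, in which case there are \emph{no} rational isotropic classes and steps (i)--(ii) produce nothing; the fallback claim that a non-elementary reflection group has full limit set on $\partial\overline{\mathscr{C}_S}$ is false for general non-elementary discrete groups (limit sets can be Cantor sets) and would require precisely the kind of control the theorem is meant to establish, while ``handle the finite-$W$ cases via Vinberg diagrams'' is a task, not a proof. The mechanism that actually closes all three gaps is the one the paper imports from Kov\'acs: given a prime exceptional (here $(-2)$-) class and any rational class in the positive cone, a Pell-equation construction produces integral classes of square $-2$, effective by Riemann--Roch, whose rays accumulate on any putative circular part of $\overline{\mathrm{Eff}(S)}$; this rules out circular parts, and Lemma \ref{lemKov} then converts the absence of circular parts into the equality $\overline{\mathrm{Eff}(S)}=\overline{\sum_{C}\mathbb{R}^{\geq 0}[C]}$.
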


Note that Kovács first proved the result above over $\mathbf{C}$ (cf. \cite[Theorem 1]{Kov1993}). See also  \cite[Theorem 2]{Kov1993} for a complete description of the pseudo-effective cone in this case.
 
Replacing the smooth rational curves with the prime exceptional divisors, it is natural, by all the above, to ask whether the result can be generalized to the case of an arbitrary projective IHS manifold. We can answer this question affirmatively. In particular, we obtained the following theorem, which is the main result of this paper.
 
\begin{thm}\label{thmden}
Let $X$ be a projective IHS manifold of Picard number greater than or equal to 3. Then either $\mathrm{Neg}(X)=\emptyset$ and $\overline{\mathrm{Eff}(X)}=\overline{\mathscr{C}_X}$ is circular, or \[
\overline{\mathrm{Eff}(X)}=\overline{\sum_{E \in \mathrm{Neg}(X)}\mathbf{R}^{\geq 0} [E]},\]
 where $\mathrm{Neg}(X)$ is the set of prime exceptional divisors in $X$.
\end{thm}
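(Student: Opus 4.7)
The plan is to reduce the statement, via the Zariski-type decomposition
\[
\EffX = \overline{\mathscr{C}_X} + \sum_{E \in \mathrm{Neg}(X)} \mathbb{R}^{\geq 0}[E]
\]
of \cite[Corollary 3.8]{Den}, to a convex-geometric claim about the positive cone. If $\mathrm{Neg}(X) = \emptyset$, the decomposition immediately yields $\EffX = \overline{\mathscr{C}_X}$, which is circular as the closure of a positive cone in the lattice $(N^1(X)_{\mathbb{R}}, q_X)$ of signature $(1, \rho(X) - 1)$. The remaining case is $\mathrm{Neg}(X) \neq \emptyset$, where I must show
\[
\overline{\mathscr{C}_X} \subseteq \overline{\sum_{E \in \mathrm{Neg}(X)} \mathbb{R}^{\geq 0}[E]}.
\]
Since $\overline{\mathscr{C}_X}$ is a round cone, an elementary convex-geometric observation reduces this inclusion to showing that the family of rays $\mathbb{R}^{\geq 0}[E]$ ($E \in \mathrm{Neg}(X)$) accumulates projectively on a dense subset of the boundary $\partial \overline{\mathscr{C}_X}$.

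To produce this dense accumulation, I would exploit the monodromy action. By Markman's description of $\mathrm{Mon}^2(X)$ for each of the four known deformation classes, the reflection $R_E$ in any prime exceptional class is a Hodge monodromy operator. Fixing $E_0 \in \mathrm{Neg}(X)$, let $W$ be the subgroup of $\mathrm{Mon}^2(X)$ generated by the $R_E$ for $E \in \mathrm{Neg}(X)$, enriched if necessary by a few monodromy conjugates of $R_{E_0}$ to ensure the presence of reflections in two non-proportional classes of $N^1(X)_{\mathbb{Z}}$. Since $\rho(X) \geq 3$, $W$ then acts as a non-elementary reflection group on the hyperbolic space $\mathscr{C}_X / \mathbb{R}^{>0}$ of real dimension at least $2$, and the classical theory of Kleinian reflection groups shows that its limit set on $\partial \mathscr{C}_X$ is the entire boundary sphere. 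In particular, the orbit $W \cdot [E_0]$ accumulates densely on $\partial \overline{\mathscr{C}_X}$.

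The final, and most delicate, step is to verify that the orbit $W \cdot [E_0]$ lies, up to sign, in $\sum_{E \in \mathrm{Neg}(X)} \mathbb{R}^{\geq 0}[E]$, so that the dense accumulation of that orbit on $\partial \overline{\mathscr{C}_X}$ transfers to the cone spanned by actual prime exceptional divisors of $X$. Markman's interpretation of monodromy orbits identifies the classes $w([E_0])$ as stably exceptional, and one has to upgrade this to effectivity on $X$ itself, with the sign prescribed by pairing against a K\"ahler class. I expect this to be the main obstacle, as stably exceptional is a priori weaker than being effective on $X$. Its resolution requires the explicit classifications due to Markman and Mongardi of stably exceptional/wall-divisor classes in each of the four deformation types, and has to be carried out case by case for $K3^{[n]}$, $\mathrm{Kum}_n$, OG6 and OG10.
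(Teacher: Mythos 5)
Your reduction of the case $\mathrm{Neg}(X)\neq\emptyset$ to the inclusion $\overline{\mathscr{C}_X}\subseteq \overline{\sum_{E\in\mathrm{Neg}(X)}\mathbb{R}^{\geq 0}[E]}$ is correct, and your last step (case-by-case effectivity via the Markman--Mongardi classifications of $\mathrm{Mon}^2$ and of stably exceptional classes) is genuinely the technical heart of the matter. The gap is in the middle step, the claimed density of a reflection-group orbit in $\partial\overline{\mathscr{C}_X}$. First, a group generated by reflections in two non-proportional negative classes $e_1,e_2$ acts through $O(\mathrm{span}(e_1,e_2))$ and fixes $\{e_1,e_2\}^{\perp}$ pointwise, so it is \emph{elementary}: its limit set consists of at most two points, which for $\rho(X)\geq 3$ is very far from the whole boundary sphere. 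Second, even for genuinely non-elementary discrete reflection groups on hyperbolic space, the limit set need not be the full sphere --- groups of the second kind (Schottky groups, geometrically finite groups of infinite covolume) are non-elementary with limit set a proper closed subset. In the situation at hand, $W_{\mathrm{Exc}}$ acts on $\mathscr{C}_X$ with fundamental domain the movable cone (Markman), so proving that its limit set is everything amounts to controlling the shape of $\MovX$ at the boundary of $\overline{\mathscr{C}_X}$, which is essentially the content of the theorem you are trying to prove; nothing in your argument rules out the limit set being a proper subset. Finally, enlarging the group by ``monodromy conjugates of $R_{E_0}$'' does not help: $gR_{E_0}g^{-1}=R_{g([E_0])}$, and for a general monodromy operator $g$ the class $g([E_0])$ is not a Hodge class, so the enlarged group does not even act on $N^1(X)_{\mathbb{R}}$; when it does, $g([E_0])$ need not be the class of a prime exceptional divisor of $X$, so the orbit points need not be effective.

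The paper avoids any global limit-set statement and argues locally. Assuming $\EffX$ has a circular part, that part must lie on $\partial\overline{\mathscr{C}_X}$ (by Theorem \ref{thmbouck} and the local rational polyhedrality of $\MovX$ away from $\partial\overline{\mathscr{C}_X}$). A Pell-equation construction (Lemma \ref{tecnolemma}), combined with Eichler's criterion and the effectivity of stably exceptional classes (Lemma \ref{keylemma}), forces the circular part to contain an \emph{integral isotropic} class $\alpha$; one then writes down explicit integral classes $\alpha_k$ with $q_X(\alpha_k)=q_X(E)<0$, proved effective by the same monodromy/stably-exceptional machinery, whose rays converge to $\mathbb{R}^{>0}\alpha$, contradicting the non-negativity of $q_X$ on a neighbourhood of the circular part in $\EffX$. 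The equality $\EffX=\overline{\sum_{E}\mathbb{R}^{\geq 0}[E]}$ is then deduced from the absence of circular parts via Minkowski's theorem together with Kov\'acs' convexity lemma (Lemma \ref{lemKov}), rather than from density of the exceptional rays. If you wish to keep your architecture, the density step must be replaced by a local argument of this kind; as written, it is the missing piece.
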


Note that the result above holds also in the singular setting (under some assumptions) and we refer the reader to Theorem \ref{thmden3} for a sketch of the proof in this case.
\vspace{0.2cm}

If one wants to adopt Kovács' strategy to prove Theorem \ref{thmden}, one quickly realizes that there is a problem: the class $\alpha$ constructed starting from a prime exceptional divisor $E$ and appearing in equation (\ref{alpha}) of Lemma \ref{tecnolemma} is not effective, in general. Indeed, while in the case of K3 surfaces the effectivity of $\alpha$ directly follows from the Riemann-Roch Theorem for surfaces, in the higher dimensional case the Riemann-Roch Theorem for IHS manifolds does not tell us anything about the effectivity of $\alpha$, mainly because we do not have vanishing theorems helping us. 
Our idea to bypass this problem is to use the monodromy group of $X$. More precisely, in our original approach, see Proposition \ref{thmden2} below and the of Section \ref{Section3}, we used the explicit description of $\mathrm{Mon}^2_{\mathrm{Hdg}}(X)$ for the known deformation classes. A stronger approach in two steps is to use
the fact that (the image of) $\mathrm{Mon}^2_{\mathrm{Hdg}}(X)$ in $\text{O}^+(\mathrm{Pic}(X))$ is of finite index to prove that
it acts on the boundary of the projectivized positive cone with dense orbits and to deduce from that Theorem \ref{thmden}. That is what we do in Section \ref{Section2}.
\vspace{0.2cm}

On the other hand, we use Kovács' approach to produce explicit effective integral divisors, either of BBF square $0$, or with the same monodromy orbit as that of a given prime exceptional divisor, but this construction (apparently) works only for the known deformation classes of IHS manifolds. 

\begin{prop}\label{thmden2}
Let $X$ be a projective IHS manifold belonging to one of the known deformation classes, with $\rho(X)\geq 2$, and carrying a primitive class $\beta$, such that $k\beta=[E]$, for some prime exceptional divisor $E$ and some positive integer $k$. Then, for any integral divisor class $\gamma$ lying in $\mathscr{C}_X$, there exists an effective class of the form $x\gamma-y \beta$ (with $x,y \in \mathbf{N}$) which is of BBF square 0, or with the same monodromy orbit as that of $\beta$.
\end{prop}

\begin{rmk}
The divisors in the proposition above are constructed as solutions of a certain Pell equation. This does not come as a surprise. Indeed, Pell's equations play an important role in describing cones of divisors of IHS manifolds (see for instance \cite{BM13}, \cite{BC22}), and we hope that our explicit computations can be helpful in similar problems. It could happen that, in the situation of Proposition \ref{thmden2}, both the mentioned types of divisors exist at once (e.g.\ \cite[Example 6.2]{DenOrtiz}). But, in general, this is not true (e.g.\ \cite[Example 6.1]{DenOrtiz}). Anyhow, by making use of the solutions of the cited Pell equation, we can construct explicit effective divisors with the same monodromy orbit as that of the prime exceptional divisor $E$, if the Picard number of $X$ is at least 3 (see Remark \ref{effectivedivisors}).
\end{rmk}

Theorem \ref{thmden} describes the pseudo-effective cone of IHS manifolds of Picard number at least $3$. In the case of Picard number 2, we have the following result of Oguiso.

\begin{thm}[\cite{Ogu}, item (2) of Theorem 1.3]\label{thmogu}
Let $X$ be a projective IHS manifold of Picard number 2. Then either both the boundary rays of the movable cone $\MovX$ are rational and $\Bir$ is a finite group or both the boundary rays of $\MovX$ are irrational and $\Bir$ is an infinite group.
\end{thm}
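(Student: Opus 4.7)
The plan is to reduce Theorem \ref{thmogu} to pure lattice theory on the rank-$2$ N\'eron--Severi lattice $N^1(X)$. Since $X$ is projective and $\rho(X)=2$, the BBF form restricts to $N^1(X)_{\mathbb{R}}$ with signature $(1,1)$ and the positive cone $\mathscr{C}_X$ is bounded by two isotropic rays. First I would record the standard lattice-theoretic dichotomy: both isotropic boundary rays of $\mathscr{C}_X$ are rational precisely when the discriminant of $q_X|_{N^1(X)}$ is a perfect square (equivalently, $N^1(X)\otimes \mathbb{Q}$ is isometric to a hyperbolic plane); otherwise neither of them is rational. By the wall-and-chamber description going back to Markman, every boundary ray of $\MovX$ is either one of these isotropic rays or the perpendicular of an integral MBM class, and such perpendiculars are automatically rational rays inside $\mathscr{C}_X$.

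I would then compare with the integral orthogonal group $O^+(N^1(X))$ of isometries preserving $\mathscr{C}_X$. A classical computation shows it is finite (of order at most $4$) when the discriminant is a perfect square and infinite (virtually $\mathbb{Z}$, generated by a hyperbolic Pell element) otherwise. Because the natural representation $\Bir \to O^+(N^1(X))$ has finite kernel by the injectivity of the action of birational automorphisms on $H^2$, in the square-discriminant case $\Bir$ is automatically finite; and since both possible kinds of walls are rational in this case, both boundary rays of $\MovX$ are rational.

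For the non-square case the goal is to show $\MovX=\overline{\mathscr{C}_X}$ and that $\Bir$ is infinite. Using Markman's identification of the Hodge monodromy group $\mathrm{Mon}^2_{\mathrm{Hdg}}(X)$ with a finite-index subgroup of $O^+(N^1(X))$, one chooses a hyperbolic element $\gamma \in \mathrm{Mon}^2_{\mathrm{Hdg}}(X)$ whose only invariant rays are the two irrational isotropic boundary rays of $\mathscr{C}_X$. Any $2$-dimensional convex subcone of $\overline{\mathscr{C}_X}$ invariant under $\gamma$ (or $\gamma^2$, should $\gamma$ swap its boundary rays) must therefore equal $\overline{\mathscr{C}_X}$; applied to $\MovX$, this gives $\MovX=\overline{\mathscr{C}_X}$, whose two boundary rays are irrational isotropic. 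A power of $\gamma$ then descends to an infinite-order element of $\Bir$.

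The main obstacle is precisely this descent: realising a hyperbolic monodromy element inside $\Bir$, which amounts to ruling out interior walls in the non-square case. Any prime exceptional divisor would produce a rational wall inside $\mathscr{C}_X$, whose $\gamma$-orbit would consist of infinitely many rational walls accumulating at the two irrational isotropic rays, contradicting the local finiteness of MBM walls of Amerik--Verbitsky. Hence the reflection subgroup generated by stably prime exceptional classes is trivial, the full Hodge monodromy group acts through $\Bir$, and the infinitude of $\mathrm{Mon}^2_{\mathrm{Hdg}}(X)$ transfers to $\Bir$.
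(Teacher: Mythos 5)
First, a point of comparison: the paper does not prove Theorem \ref{thmogu} at all --- it is imported verbatim from Oguiso --- so there is no internal argument to measure yours against. Judged on its own terms, your proposal handles the square-discriminant half correctly (rational isotropic rays, $O^{+}(N^1(X))$ finite, hence $\Bir$ finite and every possible boundary ray of $\MovX$ rational), but the non-square half contains a genuine error. You claim that a non-square discriminant forces $\MovX=\overline{\mathscr{C}_X}$ and $\Bir$ infinite, by ruling out prime exceptional divisors. This is false. Take a projective $K3$ surface $S$ whose Picard lattice has Gram matrix $\bigl(\begin{smallmatrix}2&1\\1&-2\end{smallmatrix}\bigr)$: the discriminant is $-5$, so both isotropic rays of $\mathscr{C}_S$ are irrational, yet the lattice contains $(-2)$-classes (in fact infinitely many, since $x^2+xy-y^2=-1$ has infinitely many integral solutions), so $S$ carries smooth rational curves, $\MovY[S]=\mathrm{Nef}(S)$ is cut out by two \emph{rational} walls, and $\mathrm{Bir}(S)=\mathrm{Aut}(S)$ is finite. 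Such an $X$ lands in the \emph{first} alternative of Oguiso's dichotomy despite having non-square discriminant, so the dichotomy cannot be organized by the arithmetic of the discriminant alone.

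The precise step that fails is the purported contradiction with local finiteness of walls. The $\gamma$-orbit of a rational exceptional wall under a hyperbolic isometry accumulates only at the two isotropic boundary rays of $\mathscr{C}_X$; local finiteness of the exceptional (or MBM) wall arrangement is a statement about the \emph{interior} of the positive cone, and accumulation of walls at $\partial\overline{\mathscr{C}_X}$ is both permitted and ubiquitous (it is exactly what happens for $K3$ surfaces with infinitely many $(-2)$-curves). So you cannot conclude that $W_{\mathrm{Exc}}$ is trivial, and the descent of a hyperbolic monodromy element to $\Bir$ breaks down precisely when stably prime exceptional classes exist. The correct organizing principle is the rationality of the boundary rays of $\MovX$ itself: one direction uses that an infinite-order element of $\Bir$ preserving $\MovX$ forces both boundary rays to be its irrational eigenrays, and the converse extracts finiteness of $\Bir$ from Markman's rational polyhedral fundamental domain for the $\Bir$-action on $\overline{\mathrm{Mov}(X)}^{+}$ (Theorem \ref{thmmark}), not from finiteness of $O^{+}(N^1(X))$, which fails in examples like the one above. (A small terminological slip: the walls of $\MovX$ inside $\mathscr{C}_X$ are orthogonals of stably prime exceptional classes; MBM orthogonals subdivide $\MovX$ into K\"ahler-type chambers and are not what bounds the movable cone.)

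\noindent(Here $\MovY[S]$ is shorthand abuse; read it simply as $\overline{\mathrm{Mov}(S)}$.)
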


From Theorem \ref{thmogu} one can easily deduce the following description of the pseudo-effective cone for projective IHS manifolds of Picard number 2.

\begin{cor}\label{cor1}
Let $X$ be a projective IHS manifold with $\rho(X)=2$. Then either the two boundary rays of $\EffX$ are rational and $\Bir$ is a finite group or the two boundary rays of $\EffX$ are irrational and $\Bir$ is an infinite group.
\end{cor}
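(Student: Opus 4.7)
The plan is to deduce the corollary from Theorem~\ref{thmogu} by comparing the movable and pseudo-effective cones in Picard rank $2$. Two inputs will be used. First, the decomposition
\[
\EffX = \overline{\mathscr{C}_X} + \sum_{E \in \mathrm{Neg}(X)} \mathbb{R}^{\geq 0}[E]
\]
recalled in the introduction, which in rank $2$ implies that every extremal ray of $\EffX$ is either an isotropic ray of the BBF form (a boundary ray of $\overline{\mathscr{C}_X}$) or is spanned by the class of a prime exceptional divisor, the latter being automatically rational. Second, Markman's wall description of the movable cone of an IHS manifold: any boundary ray of $\MovX$ lying strictly inside $\overline{\mathscr{C}_X}$ is carried by $E^\perp$ for some prime exceptional $E$, and is therefore rational as well.

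Suppose first that $\Bir$ is finite. By Theorem~\ref{thmogu} both boundary rays of $\MovX$ are rational. Any extremal ray of $\EffX$ spanned by a prime exceptional class is rational by definition. If instead a boundary ray $r$ of $\EffX$ is an isotropic ray of $\overline{\mathscr{C}_X}$, I would argue that $r$ must also be a boundary ray of $\MovX$: otherwise the boundary of $\MovX$ on the side of $r$ would be an $E^\perp$-wall sitting strictly inside $\overline{\mathscr{C}_X}$, and the corresponding class $[E]$ would extend $\EffX$ past $r$, so $r$ would lie in the interior of $\EffX$, a contradiction. Hence $r$ is a boundary ray of $\MovX$, thus rational by Theorem~\ref{thmogu}. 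Both boundary rays of $\EffX$ are therefore rational.

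Suppose now that $\Bir$ is infinite. By Theorem~\ref{thmogu} both boundary rays of $\MovX$ are irrational, so neither can be of the form $E^\perp$. Markman's wall description then forces $\mathrm{Neg}(X) = \emptyset$, and consequently $\MovX = \overline{\mathscr{C}_X} = \EffX$. The two boundary rays of $\EffX$ coincide with those of $\MovX$ and are in particular both irrational.

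The main obstacle I anticipate is the careful statement and use of Markman's wall structure for $\MovX$ across all four known deformation classes (including OG6 and OG10) in Picard rank $2$. Once this structural input is in place uniformly, the argument reduces to the short case analysis in the signature-$(1,1)$ plane $N^1(X)_\mathbb{R}$ sketched above.
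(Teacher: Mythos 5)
Your proof is correct and follows essentially the same route as the paper: both arguments reduce to Oguiso's Theorem \ref{thmogu} by comparing the boundary rays of $\MovX$ and $\EffX$ in the rank-two hyperbolic plane $N^1(X)_{\mathbb{R}}$, using that the non-isotropic boundary rays of $\EffX$ are spanned by prime exceptional classes and that any boundary ray of $\MovX$ interior to $\overline{\mathscr{C}_X}$ is a rational wall $E^{\perp}$. The only differences are cosmetic --- the paper transfers rationality between the two cones via the duality $\EffX = \MovX^{*}$ where you argue directly with the wall structure --- and your closing worry is unnecessary, since Markman's description of $\MovX$ holds for arbitrary projective IHS manifolds, so no restriction to the four known deformation classes is needed (the corollary is stated in that generality).
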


We recall that the reflection associated with a prime exceptional divisor $E$ is the isometry
\[
R_E \colon H^2(X,\mathbf{Z}) \to H^2(X,\mathbf{Z}), \; \alpha \mapsto \alpha-\frac{2q_X([E],\alpha)}{q_X([E])}[E].
\]

A priori, it is not obvious that the reflection $R_E$ is integral. This has been proven by Markman (cf. \cite[Corollary 3.6, item (1)]{Mark2}). 

\begin{defn}\label{defwex}
We define $W_{\mathrm{Exc}}$ as the group generated by the reflections $R_E$, where $E$ is any stably exceptional divisor of $X$.
\end{defn}

Putting all together one obtains the following result, which is a higher dimensional analog of a result due to Pjatecki\u{i}-\v{S}apiro and \v{S}afarevi\v{c}, and Sterk (cf. \cite[Corollary 4.7]{Huy2}).

\begin{cor}\label{thm2}
Let $X$ be a projective IHS manifold belonging to one of the known deformation classes. Consider the following statements.
\begin{enumerate}
\item $\mathrm{Eff}(X)$ is rational polyhedral.
\item $\mathrm{Bir}(X)$ is finite.
\item $\mathrm{O}(N^1(X))/W_{\mathrm{Exc}}$ is finite.
\item $X$ carries finitely many prime exceptional divisors.
\end{enumerate}
Then the statements \textit{(1),(2),(3)} are equivalent and imply \textit{(4)}. Furthermore, if $X$ is of Picard number at least $3$ and carries a prime exceptional divisor, all the statements above are equivalent.
\end{cor}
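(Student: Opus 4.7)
The plan is to combine Theorem \ref{thmden} with the Morrison--Kawamata cone conjecture for projective IHS manifolds proved by Markman, which asserts that $\Bir$ acts on $\MovX$ with a rational polyhedral fundamental domain, and with Markman's Hodge-theoretic description of the monodromy: up to finite index, one has a semi-direct product decomposition $\mathrm{Mon}^2_{\mathrm{Hdg}}(X) = W_{\mathrm{Exc}} \rtimes \Bir^{*}$, where $\Bir^{*}$ denotes the image of $\Bir$ in $O(N^1(X))$, and $\MovX$ is a fundamental domain for the action of $W_{\mathrm{Exc}}$ on $\overline{\mathscr{C}_X}$ (its walls being supported on classes of prime exceptional divisors).

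\textbf{Step 1 (decomposition of $\EffX$).} I would first record the identity
\[
\EffX = \MovX + \sum_{E \in \mathrm{Neg}(X)} \mathbb{R}^{\geq 0}[E],
\]
which follows from the description $\EffX = \overline{\mathscr{C}_X} + \sum \mathbb{R}^{\geq 0}[E]$ recalled in the introduction, together with the fact that every class in $\overline{\mathscr{C}_X}$ is sent into $\MovX$ by some element of $W_{\mathrm{Exc}}$. Implication (1) $\Rightarrow$ (4) is then immediate, since every prime exceptional class spans an extremal ray of $\EffX$ and a rational polyhedral cone has only finitely many such rays.

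\textbf{Step 2 (equivalence of (1), (2), (3)).} For (1) $\Rightarrow$ (2), rational polyhedrality of $\EffX$ forces that of $\MovX$, since the latter is obtained from the former by intersecting with the finitely many half-spaces $\{q_X([E],\cdot) \geq 0\}$; the cone conjecture then yields finiteness of $\Bir$. Conversely, if $\Bir$ is finite, $\MovX$ is rational polyhedral, and by the chamber structure of $\overline{\mathscr{C}_X}$ under $W_{\mathrm{Exc}}$ there are only finitely many $\Bir$-orbits of walls, hence only finitely many prime exceptional divisors, so Step~1 recovers~(1). The equivalence (2) $\Leftrightarrow$ (3) follows directly from the semi-direct product decomposition above: the finiteness of $O(N^1(X))/W_{\mathrm{Exc}}$ is equivalent to finiteness of $\Bir^{*}$, and the kernel of the natural map $\Bir \to \Bir^{*}$ is finite on any IHS.

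\textbf{Final claim and main obstacle.} Under the additional assumptions $\rho(X) \geq 3$ and $\mathrm{Neg}(X) \neq \emptyset$, Theorem \ref{thmden} identifies $\EffX$ with $\overline{\sum_{E} \mathbb{R}^{\geq 0}[E]}$, so (4) implies (1) at once and all four conditions become equivalent. The main technical hurdle is to ensure that the cited structural results on $\mathrm{Mon}^2(X)$ and the cone conjecture are available \emph{uniformly} across the four deformation classes; the $K3^{[n]}$ and $\mathrm{Kum}_n$ cases are by now classical, whereas the OG6 and OG10 cases rely on the more recent work of Markman and Mongardi already invoked in the proof of Theorem \ref{thmden}.
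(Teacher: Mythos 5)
Your proposal is correct in substance and shares the paper's skeleton: the equivalence \textit{(2)} $\Leftrightarrow$ \textit{(3)} via Markman's semi-direct product decomposition of $\mathrm{Mon}^2_{\mathrm{Hdg}}(X)$ together with the finiteness of the kernel of $\Bir \to \mathrm{GL}(N^1(X)_{\mathbb{R}})$ (Oguiso), the implication \textit{(2)} $\Rightarrow$ \textit{(1)} via the cone conjecture, and \textit{(4)} $\Rightarrow$ \textit{(1)} via Theorem \ref{thmden} (Corollary \ref{cor4}). The one place where you genuinely diverge is \textit{(1)} $\Rightarrow$ \textit{(2)}: the paper argues that a birational self-map permutes the finitely many extremal rays of the rational polyhedral cone $\mathrm{Eff}(X)$, hence has bounded order in $\mathrm{GL}(N^1(X)_{\mathbb{R}})$, and concludes by Burnside's theorem plus Oguiso's lemma; you instead pass to $\MovX$ and invoke the cone conjecture again. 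Your route works, but note two imprecisions: first, $\MovX$ is \emph{not} obtained from $\EffX$ by intersecting with the half-spaces $\{q_X(\cdot,[E])\geq 0\}$ (that description applies to $\overline{\mathscr{C}_X}$, which is not polyhedral); the clean way to transfer rational polyhedrality is the duality $\EffX=\MovX^{*}$ under the BBF pairing, which is also how the paper goes in the reverse direction. Second, deducing finiteness of $\Bir$ from a rational polyhedral cone admitting a rational polyhedral fundamental domain requires the (standard, but worth stating) volume/translate-counting argument together with the finiteness of the kernel of the representation. Finally, the corollary is stated for $\mathrm{Eff}(X)$ rather than $\EffX$, and in \textit{(2)} $\Rightarrow$ \textit{(1)} the paper is careful to pass from $\overline{\mathrm{Mov}(X)}^{+}$ to $\overline{\mathrm{Mov}(X)}^{e}=\mathrm{Mov}(X)$ via Lemma \ref{lemconeconj} (which uses the RLF conjecture, hence the restriction to the known deformation classes); your argument glosses over the distinction between the effective cone and its closure. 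The paper's Burnside argument has the small advantage of showing that \textit{(1)} $\Rightarrow$ \textit{(2)} holds for an arbitrary projective IHS manifold without any appeal to the cone conjecture.
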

We believe that the experts are aware of the equivalence between \textit{(1)}, \textit{(2)}, and \textit{(3)} in Corollary \ref{thm2}. The result is interesting on its own, and we include it for the sake of completeness.
\vspace{0.2cm}

\begin{defn}\label{defpsv}
A primitive symplectic variety is a normal compact Kähler variety $Y$ such that $h^1(Y,\mathscr{O}_Y)=0$ and $H^0\left(Y,\Omega^{[2]}_Y\right)$ is generated by a holomorphic symplectic form $\sigma$ such that $Y$ has symplectic singularities.
\end{defn}

We refer the reader to \cite{BL22} for the general theory of primitive symplectic varieties.
\vspace{0.2cm}

Using the convex geometry of the pseudo-effective cone of projective IHS manifolds we deduce the existence of (rigid) uniruled divisors on certain primitive symplectic varieties.

\begin{cor}\label{corHoring}
Let $Y$ be a singular $\mathbf{Q}$-factorial projective primitive symplectic variety admitting a resolution $f\colon X\to Y$, with $X$ a projective IHS manifold of Picard number $\rho(X)\geq 3$. Then $Y$ carries a prime exceptional (hence uniruled) divisor.
\end{cor}

The uniruledness of prime exceptional divisors on $\mathbf{Q}$-factorial primitive symplectic varieties has been proven in \cite[Theorem 1.2]{LMP23}. See also Corollary \ref{corsingular} for a more general version of Corollary \ref{corHoring}.

\subsection*{Organization of the paper}

\begin{enumerate}
\item In Section \ref{Section1} we collect all the notions and results that will be needed and respectively used to achieve our goals.

\item In Section \ref{Section2} we prove Theorem \ref{thmden} and present some consequences of it.

\item In Section \ref{Section3} we prove Proposition \ref{thmden2} and show how it can be used to prove Theorem \ref{thmden} without the use of hyperbolic geometry (but only for the known deformation classes).

\end{enumerate}

 \section*{Acknowledgements}
This work is part of my PhD thesis. First of all, I would like to thank my doctoral advisors Gianluca Pacienza and Giovanni Mongardi, for their suggestions and hints. I am grateful to thank F.\ Bastianelli,  A.\ F.\ Lopez, and F.\ Viviani, for pointing out to me Example \ref{ex1}. 
I would also like to thank A.\ Höring for pointing out Corollary \ref{corHoring},  
and S.\ Francaviglia, S.\ Riolo, D.\ Zheng-Xu for useful discussions in hyperbolic geometry.
Finally, I would like to thank the anonymous referee from the Annals of the Institute Fourier for carefully revising this paper. Her/His suggestions considerably improved the manuscript.

\section{Preliminaries}\label{Section1}
In this section, we collect the main definitions, tools, and results needed for the rest of the paper.
\subsection{Generalities on IHS manifolds}

For a general introduction to IHS manifolds, we refer the reader to \cite{Joyce}.
Let $X$ be an IHS manifold. Thanks to the work \cite{Beau} of Beauville, there exists a quadratic form on $H^2(X,\mathbf{C})$ generalizing the intersection form on a surface. In particular, choosing the symplectic form $\sigma$ in such a way that  $\int_X(\sigma\overline{\sigma})^n=1$, one can define
\[
q_X(\alpha):=\frac{n}{2}\int_X (\sigma\overline{\sigma})^{n-1}\alpha^2+(1-n)\left(\int_X\sigma^n\overline{\sigma}^{n-1}\alpha\right)\cdot \left(\int_X\sigma^{n-1}\overline{\sigma}^n\alpha\right),
\]
for any $\alpha \in H^2(X,\mathbf{C})$. The quadratic form $q_X$ is non-degenerate and is known as the Beauville-Bogomolov-Fujiki form (BBF form in what follows). Up to a rescaling $q_X$ is integral and primitive on $H^2(X,\mathbf{Z})$. Also, $q_X$ is invariant by deformations. Naturally associated with $q_X$ is a bilinear form, which we denote by $q_X(-,-)$. If $\alpha$ is any element of $H^2(X,\mathbf{C})$, we denote $q_X(\alpha,\alpha)$ by $q_X(\alpha)$.
A prime divisor in $X$ will be a reduced and irreducible hypersurface. We say that a prime divisor $E$ of $X$ is exceptional if $q_X(E)<0$.

Let $f \colon X \DashedArrow[->,densely dashed] X'$ be a bimeromorphic map, where $X'$ is another IHS manifold. Recall that $f$ restricts to an isomorphism $f \colon U \to U'$, where $\mathrm{codim}_{X}(X \setminus U)$, $\mathrm{codim}_{X'}(X' \setminus U')\geq 2$ and $X\setminus U$, $X' \setminus U'$ are analytic subsets of $X$ and $X'$ respectively (see for example \cite[paragraph 4.4]{Huy1}). Using the long exact sequence in cohomology with compact support and Poincaré duality, one has the usual chain of isomorphisms
\[
H^2(X, \mathbf{R}) \cong H^2(U, \mathbf{R}) \cong H^2(U', \mathbf{R}) \cong H^2(X',\mathbf{R}),
\]
and the composition is an isometry with respect to $q_X$ and $q_{X'}$ (see for example \cite[Proposition I.6.2]{OGrady} for a proof), hence its restriction to $H^{1,1}(X, \mathbf{R})$ induces an isometry $H^{1,1}(X, \mathbf{R}) \cong H^{1,1}(X', \mathbf{R})$, which we will denote by $f_{*}$ (push-forward). We will denote the inverse of $f_{*}$ by $f^{*}$ (pull-back).

\subsection{Some lattice theory}
A lattice is a couple $(L,b)$, where $L$ is a finitely generated, free abelian group, and $b$ is a non-degenerate, integral valued, symmetric bilinear form $b \colon L \times L \to \mathbf{Z}$. If no confusion arises, we will simply write $L$. We say that  $L$ is even if $b(x,x)$ is even for any $x \in L$. The signature $\mathrm{sign}(b)$ of $b$ is the signature of the natural extension $b_{\mathbf{R}}$ of $b$ to $L \otimes_{\mathbf{Z}} \mathbf{R}$.

The divisibility $\mathrm{div}_L(x)$ of an element $x \in L$ is defined as the positive generator of the ideal $b(x,L)$ in $\mathbf{Z}$. This means that $\mathrm{div}_L(x)$ is exactly the least (positive) integer that can be obtained by multiplying $x$ by the elements of $L$. When no confusion arises, we will write $\mathrm{div}(x)$ instead of $\mathrm{div}_L(x)$.

As $b$ is non-degenerate, one has an injective group homomorphism $L \hookrightarrow L^{\vee}:=\mathrm{Hom}_{\mathbf{Z}}(L,\mathbf{Z})$, by sending an element $x$ of $L$ to the element $b(x,-)$ of $L^{\vee}$. We will denote $b(x,-)$ by $x$ when no confusion arises. Note that we have the following identification
\[
L^{\vee}= \{x\in L\otimes_{\mathbf{Z}} \mathbf{Q} \; | \; b(x,y) \in \mathbf{Z}, \text{ for any } y \in L\}\subset L \otimes_{\mathbf{Z}} \mathbf{Q}.
\]

The discriminant group of $L$ is the finite group $A_L:=L^{\vee}/L$. If $A_L$ is trivial we say that $L$ is unimodular. If $f$ is an element of $L^{\vee}$ we will denote its class in $A_L$ by $[f]$. We say that an element $x \in L$ is primitive if we cannot write $x=ax'$, where $a\neq 1$, $a \in \mathbf{Z}^{>0}$, and $x' \in L$. 

An isometry of $L$ is an automorphism of it (as an abelian group) preserving $b$.  The group of isometries of $L$ is denoted by $\text{O}(L)$. Suppose now that $L$ has signature $\mathrm{sign}(b)=(3,b_2-3)$, where $b_2 \in \mathbf{N}$, and $b_2>3$. Define the cone
\[
C_L:=\{v \in L\otimes_{\mathbf{Z}} \mathbf{R} \;|\; b(v)>0\}.
\] 
In \cite[Lemma 4.1]{Mark} it has been proven that $C_L$ has the homotopy type of $S^2$, hence $H^2(C_L,\mathbf{Z})\cong \mathbf{Z}$. Any isometry in $\text{O}(L)$ induces a homeomorphism of $C_L$, which in turn induces an automorphism of $H^2(C_L,\mathbf{Z})\cong \mathbf{Z}$. This automorphism can act as $1$ or $-1$. We define $\text{O}^+(L)$ as the subgroup of the isometries of $L$ acting trivially on $H^2(C_L,\mathbf{Z})$. The group $\widetilde{\text{SO}}^+(L)$ is the subgroup of the isometries of $L$ of determinant 1, acting trivially on both $A_L$ and $H^2(C_L,\mathbf{Z})\cong \mathbf{Z}$.

\begin{rmk}\label{rmklattice} Given a primitive element $x\in L$, we can consider the element $x/\mathrm{div}(x)$ of $L^{\vee}$, which in turn gives the element $[x/\mathrm{div}(x)]$ of $A_L$. Note that $\mathrm{ord}([x/\mathrm{div}(x)])=\mathrm{div}(x)$, hence $\mathrm{div}(x)$ divides $|A_L|$. Indeed, suppose that the order is $k \leq \mathrm{div}(x)$, then $kt=\mathrm{div}(x)$ for some positive integer $t$. This implies that there exists an element $y \in L$ such that $x=ty$, and the primitivity of $x$ forces $t$ to be 1, and so $k=\mathrm{div}(x)$. Also, note that if $M$ is the maximum among the orders of the elements of $A_L$, then $\mathrm{div}(x)\leq M$. Indeed, the element $x/\mathrm{div}(x)$ has order $\mathrm{div}(x)$, thus $\mathrm{div}(x) \leq M$.
\end{rmk}

 The following is a result of Eichler, which will be very useful in this article. 

\begin{lem}[Eichler's criterion]\label{Eichlem}
Let $L'$ be an even lattice and $L=U^2\oplus L'$. Let $v,w \in L$ be two primitive elements such that the following hold:
\begin{itemize}
\item $b(v)=b(w)$.
\item $[v/\mathrm{div}(v)]=[w/\mathrm{div}(w)]$ in $A_L$.
\end{itemize}
Then there exists an isometry $\iota \in \widetilde{\mathrm{SO}}^+(L)$, such that $\iota(v)=w$.
\end{lem}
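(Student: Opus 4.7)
The plan is to construct the required isometry as a product of \emph{Eichler transvections}. For each hyperbolic plane $U$ in $L$, pick isotropic generators $e, f$ with $b(e,f)=1$, and for any $a \in e^{\perp}$ define
\[
\psi_{e,a}(x) = x + b(x,e)\,a - b(x,a)\,e - \tfrac{1}{2} b(a,a)\, b(x,e)\,e.
\]
A direct computation shows $\psi_{e,a}$ is an isometry of $L$ of determinant $1$ that fixes $e$, hence acts trivially on the orientation of $C_L$, and that it acts as the identity on $A_L$ (since it shifts vectors by integer multiples of $e \in L$, and because it fixes a vector with nonzero square in $L$, one checks the action on $L^{\vee}/L$ is trivial). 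Thus $\psi_{e,a}\in \widetilde{SO}^+(L)$, and it suffices to exhibit $\iota$ as a composition of such transvections applied through both $U$-summands.

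The key step is then to prove a \emph{normal form} result: every primitive vector $v\in L$ can be sent, by a composition of Eichler transvections associated with $U^{2}$, to a standard vector of the form
\[
v_{\mathrm{std}} = d\, e_1 + k\, f_1 + w_0,
\]
where $e_1,f_1$ are the hyperbolic generators of the first $U$-summand, $d=\mathrm{div}(v)$, $w_0\in L'$ is a fixed representative of the class $[v/\mathrm{div}(v)]\in A_L$ whose square is congruent to $b(v)\pmod{2d}$, and $k\in \mathbb{Z}$ is then uniquely determined by the equation $2dk + b(w_0)=b(v)$. I would carry this out by first using transvections based at the \emph{second} $U$-summand to kill, one component at a time, the $L'$-part of $v$ modulo $d\cdot L'$; here the role of the second hyperbolic plane is crucial, because it supplies isotropic vectors $e$ with $b(v,e)$ attaining any divisor of $\mathrm{div}(v)$, so that $\psi_{e,a}$ can shift the $L'$-component of $v$ by an arbitrary multiple of $\mathrm{div}(v)\cdot a$. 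Then a second round of transvections based at the first $U$-summand brings the projection of $v$ to $U^2\oplus L'$ into the displayed normal form.

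Once this is established, the hypotheses $b(v)=b(w)$ and $[v/\mathrm{div}(v)]=[w/\mathrm{div}(w)]$ imply that the normal forms of $v$ and $w$ coincide (the class determines $w_0$, and then $d$ and $k$ are forced by $b(v)$), so that composing the reduction for $v$ with the inverse of the reduction for $w$ produces the desired isometry $\iota\in\widetilde{SO}^+(L)$ with $\iota(v)=w$.

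The main obstacle is the normal-form step: one needs to make the reduction algorithm terminate, which is where the assumption that $L'$ is even and that \emph{two} copies of $U$ are present comes in. Evenness guarantees the half-integer term $\tfrac{1}{2}b(a,a)$ in $\psi_{e,a}$ is an integer, so the transvections preserve $L$; and two hyperbolic planes give enough isotropic vectors non-orthogonal to $v$ to perform both the $L'$-reduction and the $U$-coordinate adjustment independently. I would expect the argument to follow the lines of the classical treatment of Eichler transvections (as in Gritsenko--Hulek--Sankaran), and the essential arithmetic input is that the coset $[v/\mathrm{div}(v)]$ and the square $b(v)$ are the only invariants preserved by these transvections.
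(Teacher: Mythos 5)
The paper does not actually prove this lemma: it states it and immediately attributes it to \cite[Lemma 2.6]{Mon1} (which in turn rests on the classical Eichler/Gritsenko--Hulek--Sankaran argument). So there is no in-paper proof to match, and your route via Eichler transvections is precisely the standard one; in that sense you and the paper are pointing at the same argument, you are just trying to unpack it.

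Two points in your write-up deserve flagging. First, your justification that $\psi_{e,a}\in \widetilde{SO}^+(L)$ is shaky in both halves: fixing the isotropic vector $e$ does \emph{not} by itself imply trivial action on $H^2(C_L,\mathbb{Z})$ (reflections in negative vectors also fix plenty of vectors and lie in $O^+$ only sometimes); the correct reason is that $\psi_{e,a}$ is unipotent, hence connected to the identity in $O(L\otimes\mathbb{R})$, so it lies in $SO^+$. Likewise the trivial action on $A_L$ is because for $y\in L^{\vee}$ the difference $\psi_{e,a}(y)-y$ is an integral combination of $e$ and $a$, both of which lie in $L$ --- the integrality of the coefficient $\tfrac12 b(a,a)$ being exactly where evenness enters --- not because the shift is ``by multiples of $e$.'' Second, and more seriously, the normal-form reduction \emph{is} the lemma, and it is only asserted. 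In particular, your claim that the second hyperbolic plane ``supplies isotropic vectors $e$ with $b(v,e)$ attaining any divisor of $\mathrm{div}(v)$'' is not automatic: a priori $b(v,e_2)$ and $b(v,f_2)$ are just some multiples of $\mathrm{div}(v)$, and a preliminary sequence of transvections is needed to arrange that one of them equals $\mathrm{div}(v)$ before you can shift the $L'$-component by $\mathrm{div}(v)\cdot a$. This gcd-adjustment step is where the presence of \emph{two} copies of $U$ is genuinely used, and it is the part of the classical proof that cannot be waved through. Since you ultimately defer to Gritsenko--Hulek--Sankaran for exactly this step, your proposal is a correct roadmap but not a self-contained proof; given that the paper itself only cites the result, that is a defensible place to stop, but the gap should be acknowledged as a citation rather than presented as an argument.
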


The version of Eichler's criterion given above has been taken from \cite[Lemma 2.6]{Mon1}.

Given an IHS manifold $X$, one has the lattice $(H^2(X,\mathbf{Z}),q_X)$, where (with some abuse of notation) by $q_X$ we mean the integral valued, symmetric bilinear form induced by the restriction of the BBF form to $H^2(X,\mathbf{Z})$. Note that, for any of the known IHS manifolds, the lattice $H^2(X,\mathbf{Z})$ is even and contains two copies of the hyperbolic lattice $U$ so that we can use Eichler's criterion. The signature of $q_X$ on $H^2(X,\mathbf{Z})$ is $(3, b_2(X)-3)$, where $b_2(X)$ is the second Betti number of $X$. We will denote the discriminant group $A_{H^2(X,\mathbf{Z})}$ by $A_X$. Also $(\mathrm{Pic}(X),q_X)$ is a lattice, and in this case, the signature of $q_X$ is $(1,\rho(X)-1)$, where $\rho(X)$ is the Picard number of $X$. We refer the reader to \cite[Corollary 23.11]{Joyce} for proof of these facts.

\subsection{Monodromy operators}
Let $\pi \colon \mathcal{X}\to S$ be a smooth and proper family of IHS manifolds over a connected analytic (possibly singular, reducible) base $S$, whose central fiber is a fixed IHS manifold $X$. Let $R^k\pi_{*}\mathbf{Z}$ be the $k$-th higher direct image of $\mathbf{Z}$ . The space $S$ is locally contractible, and the family $\pi$ is topologically locally trivial (see for example \cite[Theorem 14.5]{Joyce}). The sheaf $R^k\pi_{*}\mathbf{Z}$ is the sheafification of the presheaf
\[
U \mapsto H^k(\pi^{-1}(U),\mathbf{Z}), \; \text{ for any open subset } U\subset S,
\]   
which is a constant presheaf, by the local contractibility of $S$ and the local triviality of $\pi$. Then $R^k\pi_{*}\mathbf{Z}$ is a locally constant sheaf, i.e. a local system, for every $k \in \mathbf{N}$. Now, let $\gamma \colon [0,1] \to S$ be a continuous path. Then $\gamma^{-1}\left(R^k\pi_{*}\mathbf{Z}\right)$ is a constant sheaf for every $k$.

\begin{defn}
\begin{enumerate}
\item Set $\pi^{-1}(\gamma(0))=X$ and $\pi^{-1}(\gamma(1))=X'$. The parallel transport operator $T^k_{\gamma, \pi}$ associated with the path $\gamma$ and $\pi$ is the isomorphism $T^k_{\gamma,\pi}\colon H^k(X,\mathbf{Z})\to H^k(X',\mathbf{Z})$ between the stalks at 0 and 1 of the sheaf $R^k\pi_{*}\mathbf{Z}$, induced by the trivialization of $\gamma^{-1}\left(R^k\pi_{*}\mathbf{Z}\right)$. The isomorphism $T^k_{\gamma,\pi}$ is well defined on the fixed endpoints homotopy class of $\gamma$.
\item If $\gamma$ is a loop, we obtain an automorphism $T^k_{\gamma,\pi}\colon H^k(X,\mathbf{Z})\to H^k(X,\mathbf{Z})$. In this case $T^k_{\gamma,\pi}$ is called a monodromy operator.
\item The $k$-th group of monodromy operators on $H^k(X,\mathbf{Z})$ induced by $\pi$ is
\[
\mathrm{Mon}^k(X)_{\mathbf{\pi}}:=\{T^k_{\gamma,\pi} \; | \; \gamma(0)=\gamma(1) \}
\]
\end{enumerate}
\end{defn}
With the definition above we can define the monodromy groups of an IHS manifold $X$.
\begin{defn}
The $k$-th monodromy group $\mathrm{Mon}^k(X)$ of an IHS manifold $X$ is defined as the subgroup of $\mathrm{Aut}_{\mathbf{Z}}(H^k(X,\mathbf{Z}))$ generated by the subgroups of the type  $\mathrm{Mon}^k(X)_{\mathbf{\pi}}$, where $\pi \colon \mathcal{X} \to S$ is a smooth and proper family of IHS manifolds over a connected analytic base. The elements of $\mathrm{Mon}^k(X)$ are also called monodromy operators.
\end{defn}

We are interested in the group $\mathrm{Mon}^2(X)$, of which we will need the characterization for some of the known IHS manifolds. Notice that $\mathrm{Mon}^2(X) \subseteq \text{O}^+(H^2(X,\mathbf{Z}))$. We will mostly use three subgroups of $\mathrm{Mon}^2(X)$. The first is $\mathrm{Mon}^2_{\mathrm{Bir}}(X)$, defined as the elements of $\mathrm{Mon}^2(X)$ induced by the birational self maps of $X$. Indeed, any birational self-map induces a monodromy operator on $H^2(X,\mathbf{Z})$, by a result of Huybrechts (cf. \cite[Corollary 2.7]{Huy3}). The second is $\mathrm{Mon}_{\mathrm{Hdg}}^2(X)$, namely the subgroup of the elements of $\mathrm{Mon}^2(X)$ which are also Hodge isometries. The last one is $W_{\mathrm{Exc}}$ (see Definition \ref{defwex}). Indeed, in \cite[Corollary 3.6, item (1)]{Mark2} Markman proves that any reflection $R_E$ (associated with a prime exceptional divisor $E$) is an element of $\mathrm{Mon}_\mathrm{Hdg}^2(X)$.

\vspace{0.2cm}

Let $\mathrm{Def}(X)$ be the Kuranishi deformation space of any IHS manifold $X$ and $\mathcal{X}\to \mathrm{Def}(X)$ the universal family. Also, let $0$ be a distinguished point of $\mathrm{Def}(X)$ such that $\mathcal{X}_0 \cong X$. Set $\Lambda=H^2(X,\mathbf{Z})$. By the local Torelli Theorem, $\mathrm{Def}(X)$ embeds holomorphically into the period domain
\[
\Omega_{\Lambda}:=\{p \in \mathbf{P}(\Lambda \otimes_{\mathbf{Z}} \mathbf{C}) \; | \; q_X(p)=0 \text{ and } q_X(p+\overline{p})>0\}
\]
as an open (analytic) subset, via the local period map
\[
\mathcal{P}\colon \mathrm{Def}(X)\to \Omega_{\Lambda}, \; t \mapsto \left[H^{2,0}(\mathcal{X}_t)\right].
\]
Now, let $L$ be a holomorphic line bundle on $X$. The Kuranishi deformation space of the couple $(X,L)$ is defined as $\mathrm{Def}(X,L):= \mathcal{P}^{-1}(c_1(L)^{\perp})$, where $c_1(L)^{\perp}$ is a hyperplane in $\mathbf{P}(\Lambda \otimes_{\mathbf{Z}} \mathbf{C})$. The space $\mathrm{Def}(X,L)$ is the part of $\mathrm{Def}(X)$ where $c_1(L)$ stays algebraic. Up to shrinking $\mathrm{Def}(X)$ around $0$, we can assume that $\mathrm{Def}(X)$ and $\mathrm{Def}(X,L)$ are contractible. Let $s$ be the flat section (with respect to the Gauss-Manin connection) of $R^2\pi_{*}\mathbf{Z}$ through $c_1(L)$ and $s_t \in H^{1,1}(\mathcal{X}_t,\mathbf{Z})$ its value at $t \in \mathrm{Def}(X,L)$. 

The following are the line bundles we will mostly be interested in.

\begin{defn}\label{stablyexc}
A line bundle $L$ on an IHS manifold $X$ is stably exceptional if there exists a closed analytic subset $Z \subset \mathrm{Def}(X,L)$, of positive codimension, such that the linear system $|L_t|$ consists of a prime exceptional divisor for every $t \in \mathrm{Def}(X,L) \setminus Z$. 
\end{defn}

Prime exceptional divisors are stably exceptional, by \cite[Proposition 5.2]{Mark4}.
By making use of parallel transport operators, we can define "stably exceptional classes".

\begin{defn}\label{stablyexc1}
Let $X$ be a projective IHS manifold. A primitive, integral divisor class $\alpha \in N^1(X)$ (the Néron-Severi group of $X$) is stably exceptional if $q_X(\alpha,A)>0$, for some ample divisor $A$, and there exist a projective IHS manifold $X'$ and a parallel transport operator 
\[
f \colon H^2(X,\mathbf{Z})\to H^2(X',\mathbf{Z}),
\] 
such that $kf(\alpha)$ is represented by a prime exceptional divisor on $X'$, for some integer $k$.
\end{defn}

For example, a line bundle of self-intersection $-2$, intersecting positively any ample line bundle on a (smooth) projective K3 surface is stably exceptional, and its class in the Néron-Severi space is stably exceptional. Note that any stably exceptional class (line bundle) is effective, by the semi-continuity Theorem.
\vspace{0.2cm}

\subsection{Cones and positivity notions}
Throughout the rest of this section (if not otherwise stated) $X$ will be a projective IHS manifold, and $Y$ a smooth complex projective variety.
 \begin{defn}
 An integral divisor $D$ on $Y$ is big if there exists a positive integer $m$ such that $mD \equiv A+N$, where $A$ is integral and ample, $N$ is integral and effective, and $\equiv$ is the numerical equivalence relation. When $D$ is a $\mathbf{Q}$-divisor, we say that it is big if there exists a positive integer $m$ such that $mD$ is integral and big. If $D$ is an $\mathbf{R}$-divisor, we say that it is big if $D=\sum_ia_iD_i$, where $a_i$ is a positive real number and $D_i$ is integral and big, for any $i$.
 \end{defn}

Note that bigness behaves well with respect to $\equiv$, namely, if $D\equiv D'$, $D$ is big if and only if $D'$ is. The big cone $\mathrm{Big}(Y)$ of $Y$ is the convex cone in $N^1(Y)_{\mathbf{R}}$ of all big divisor classes.
\vspace{0.2cm}

Recall that the pseudo-effective cone $\overline{\mathrm{Eff}(Y)}$ of $Y$ is the closure of the big cone $\mathrm{Big}(Y)$ in the Néron-Severi space $N^1(Y)_{\mathbf{R}}$, which is defined as $N^1(Y)_{\mathbf{R}}:=N^1(X) \otimes \mathbf{R}$, where $N^1(Y)$ is the Néron-Severi group of $Y$. We notice that the big cone is the interior of $\overline{\mathrm{Eff}(Y)}$. For an excellent account of big divisors and cones in the Néron-Severi space on arbitrary complex projective varieties, we refer the reader to \cite{Laz}.
\vspace{0.2cm}

Let $D$ be any $\mathbf{R}$-divisor on $Y$. The \textit{augmented base locus} of $D$ is
\[
\mathbf{B}_{+}(D):=\bigcap_{D=A+E}\mathrm{Supp}(E),
\]
where the intersection is taken over all the decompositions of the form $D=A+E$, where $A$ and $E$ are $\mathbf{R}$-divisors such that $A$ is ample and $E$ is effective.

\begin{defn}
The birational K\"ahler cone $\mathscr{BK}_X$ is defined as
\[
\mathscr{BK}_X=\bigcup_{f} f^{*}\mathscr{K}_{X'},
\]
where $f$ varies among all the bimeromorphic maps $f \colon X \DashedArrow[->,densely dashed    ] X'$, where $X'$ is another IHS manifold (which must be projective).
\end{defn}

We now introduce another cone in the Néron-Severi space.

\begin{defn}
\begin{itemize}
\item  A movable line bundle on $Y$ is a line bundle $L$ such that the linear series $|mL|$ has no divisorial components in its base locus, for $m$ sufficiently large and divisible. A divisor $D$ is movable if $\mathscr{O}_Y(D)$ is.
\item The movable cone is the cone $\mathrm{Mov}(Y)$ in $N^1(Y)_{\mathbf{R}}$ generated by the classes of movable divisors. 
\end{itemize} 
\end{defn}

In the case of projective IHS manifolds, an important relation among the last two cones we have introduced is 
\begin{equation}
\MovX=\overline{\mathscr{BK}_X} \cap N^1(X)_{\mathbf{R}}
\end{equation}
(cf. \cite[Theorem 7]{Hassett}), where the closure of the birational K\"{a}hler cone is taken in $H^{1,1}(X,\mathbf{R})$.

We now recall a couple of definitions concerned with the geometry of convex cones.
\begin{defn}\label{circpart}
Let $K\subset \mathbf{R}^k$ be a closed convex cone with a non-empty interior. 
\begin{itemize}
\item We say that $K$ is locally (rational) polyhedral at $v \in \partial K$ if $v$ has an open neighborhood $U = U(v)$, such that $K \cap U$ is defined in $U$ by a finite number of (rational) linear inequalities.
\item Let $U$ be an open  subset of $\partial K$. We say that $U$ is a circular part of $K$ if there is no element in $U$ at which $K$ is locally polyhedral. 
\end{itemize}
\end{defn}

\begin{rmk}
Note that in the papers \cite{Kov1993}, \cite{Kov} Kovács adopted another definition of "circular part" for a convex cone, to prove his Theorem \ref{thmkov}. In particular, let $K$ be as in Definition \ref{circpart}. We say that $K$ is locally finitely generated at $v\in \partial K$ if there exists a finite set of points $S=\{v_1,\dots,v_k\}\subset K$ and a closed subcone $C \subset K$ such that $v\not\in C$ and $K$ is generated by $S$ and $C$. For Kovács, an open subset $U \subset \partial K$ is a circular part of $K$ if there is no point of $U$ at which $K$ is locally finitely generated. One can show that if $K$ is locally polyhedral at a point, then $K$ is locally finitely generated at that point. But the vice versa does not hold, in general (see Example \ref{ex1}). 
However, in Theorem \ref{thmden}, a circular part of the pseudo-effective cone (if there is any) with respect to Definition \ref{circpart} is circular with respect to Kovács' definition too, because such a circular part would be contained in $\partial \overline{\mathscr{C}_X}$ (see "Proof of Theorem 1.2"). Hence, no problem arises in using our definition. Note that Definition \ref{circpart} has also been adopted by Huybrechts in \cite[Chapter 8, Section 3]{Huy2}, where he provided a slightly different proof of Theorem \ref{thmkov}.
\end{rmk}

The following is an example of a locally finitely generated cone that is not locally polyhedral (at a point).

\begin{ex}\label{ex1}
Let $C$ be the "ice cream" in Figure 1 and $C \subset \mathbf{R}^3 \subset \mathbf{R}^4$ an embedding of $C$ in $\mathbf{R}^4$, such that the vertex $V$ of $C$ does not coincide with the origin of $\mathbf{R}^4$. Let $K$ be the convex cone generated by $C$ in $\mathbf{R}^4$ (via the embedding we have chosen). By our choice, the interior of this cone is not empty. Then $K$ is locally finitely generated at $V$, but not locally polyhedral at $V$. Indeed, if $S$ is the sphere appearing in Figure 1, the cone $K$ is generated by $V$ and the subcone $K'$ of $K$, generated by the sphere $S$ embedded in $\mathbf{R}^4$. The non-local polyhedrality of $K$ at $V$ is clear.
\end{ex}

\begin{figure}\label{fig2}
\begin{turn}{-60}
\begin{tikzpicture}[scale=0.3]
        \def\R{5}   
        \def\r{4.8} 
        \pgfmathsetmacro\h{sqrt(\R*\R-\r*\r)} 
        \pgfmathsetmacro\a{asin(\r/\R)}
        \pgfmathsetmacro\H{\r*tan(\a)}
        
        
        \path   (-\r,-\h) coordinate (S) --++ (\r,0) coordinate (K) --++ (\r,0) coordinate (T)
                (0,0) coordinate (O) --++ (0,-\h-\H) coordinate (U);
        
        \draw   (T) arc (-90+\a:270-\a:\R);
        \draw   (S) -- (U) -- (T)
                (T) arc (0:-180:\r cm and 0.2*\r cm);
        \draw [dashed]  (S) -- (T)
                        (K) -- (U)
                        (T) arc (0:180:\r cm and 0.2*\r cm)
                        (S) arc (270-\a:270+\a:\R);
         \node [rotate=60] at (0.5,-19) {$\scaleto{V}{7pt}$};
         \draw [fill] (0,-18) circle [radius=0.1];

    \end{tikzpicture}
    \end{turn}
     \caption{Hyperplane section of a locally finitely generated cone at a point.}
    \end{figure}
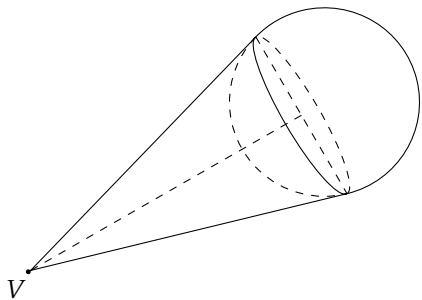
   
We will need the following Lemma, which is the key tool to produce circular parts in $\EffX$, in the proof of Theorem \ref{thmden}.
\begin{lem}[\cite{Kov1993}, Lemma 2.6]\label{lemKov}
Let $Q$ be a smooth compact quadric hypersurface in $\mathbf{R}^k$ and $C\subset \mathbf{R}^k$ a compact convex set. Assume that $Q \not\subset C$, then there exists a non empty open subset $U$ of $Q$  such that $U \subset \partial (\mathrm{Conv}(Q\cup C))$, where $\mathrm{Conv}(Q\cup C)$ is the convex hull of $Q \cup C$. 
\end{lem}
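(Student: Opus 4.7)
My strategy is to exploit the strict convexity of the quadric and to produce supporting hyperplanes to $\mathrm{Conv}(Q\cup C)$ using Hahn--Banach separation. A smooth compact quadric hypersurface $Q\subset\mathbb{R}^k$ is (up to translation) an ellipsoid, since its defining quadratic form must be positive definite --- otherwise $Q$ would be non-compact or singular. Hence $Q=\partial D$ for a strictly convex compact body $D$, and the outward Gauss map $\nu\colon Q\to S^{k-1}$ is a diffeomorphism: each direction $n\in S^{k-1}$ is the outward normal to $Q$ at a unique point $q^{*}(n)$, and the tangent hyperplane at $q^{*}(n)$ is the unique supporting hyperplane to $D$ with outward normal $n$.

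Next I would pass to the support functions $h_Q(n):=\max_{q\in Q}\langle q,n\rangle=\langle q^{*}(n),n\rangle$ and $h_C(n):=\max_{c\in C}\langle c,n\rangle$, both continuous on $S^{k-1}$, and consider the open subset
\[
V:=\{n\in S^{k-1}\ |\ h_Q(n)>h_C(n)\}\subset S^{k-1}.
\]
To see that $V\neq\emptyset$, I would pick any $q_0\in Q\setminus C$ (which exists by hypothesis) and apply Hahn--Banach to strictly separate $q_0$ from the compact convex set $C$: this yields a direction $n_0\in S^{k-1}$ with $\langle q_0,n_0\rangle>h_C(n_0)$, and therefore $h_Q(n_0)\geq\langle q_0,n_0\rangle>h_C(n_0)$, so $n_0\in V$.

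To conclude, I would set $U:=\nu^{-1}(V)\subset Q$, which is open and non-empty since $\nu$ is a homeomorphism. For $q=q^{*}(n)\in U$, the affine hyperplane $H_n:=\{x\in\mathbb{R}^k\ |\ \langle x,n\rangle=h_Q(n)\}$ contains $q$, and both $Q$ (by the definition of $h_Q$) and $C$ (because $h_C(n)<h_Q(n)$) lie in the closed half-space $\{x\ |\ \langle x,n\rangle\leq h_Q(n)\}$. Convexity of this half-space forces $\mathrm{Conv}(Q\cup C)$ to lie in it as well, so $H_n$ is a supporting hyperplane to $\mathrm{Conv}(Q\cup C)$ at $q$ and therefore $q\in\partial\mathrm{Conv}(Q\cup C)$, giving $U\subset\partial\mathrm{Conv}(Q\cup C)$ as required. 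The only genuinely non-trivial ingredient is the strict convexity of the ellipsoid, which ensures that $\nu$ is a bijection and hence that an open set of normal directions corresponds to an open subset of $Q$; I expect this to be the conceptual heart of the proof rather than a serious obstacle, and it is the precise place where the hypothesis that $Q$ is a quadric (and not merely a compact convex hypersurface) is used.
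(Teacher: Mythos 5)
The paper does not prove this lemma: it is quoted from Kov\'acs (\cite{Kov1993}, Lemma 2.3) and used as a black box, so there is no in-paper argument to compare against. Judged on its own, your proof is correct and complete. The reduction of a smooth compact quadric hypersurface to an ellipsoid is sound (an indefinite or degenerate quadratic part forces non-compactness or a singular point), the Gauss map of an ellipsoid is indeed a homeomorphism onto $S^{k-1}$ with $h_Q(n)=\langle q^{*}(n),n\rangle$, the set $V$ is open by continuity of support functions and non-empty by strict separation of $q_0$ from $C$, and a point of $\mathrm{Conv}(Q\cup C)$ lying on a supporting hyperplane of that convex hull is necessarily a boundary point. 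Two cosmetic remarks: you only need $\nu$ to be a continuous surjection (so that $\nu^{-1}(V)$ is open and non-empty), not a diffeomorphism; and the degenerate case $C=\emptyset$ should either be excluded or handled by the convention $h_{\emptyset}\equiv-\infty$, under which the whole of $Q$ already lies in $\partial\,\mathrm{Conv}(Q)$. Neither affects the validity of the argument.
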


Let $\mathscr{P}\mathrm{eff}(X)$ be the analytic pseudo-effective cone of an IHS manifold $X$, as defined for example in \cite[Section 2.3]{Bouck}. If $X$ is projective
\begin{equation}\label{equality1}
\mathscr{P}\mathrm{eff}(X) \cap N^1(X)_{\mathbf{R}} = \overline{\mathrm{Eff}(X)}
\end{equation}

(cf. \cite[Proposition 1.4]{Dem}). The following result of Boucksom (which holds in a more general context) will be useful.

\begin{thm}[Theorem 3.19 of \cite{Bouck}]\label{thmbouck}
The boundary of the pseudo-effective cone $\mathscr{P}\mathrm{eff}(X)$ of an IHS manifold $X$ is locally polyhedral away from $\overline{\mathscr{BK}_X}$, with extremal rays generated by (the classes of) prime exceptional divisors.
\end{thm}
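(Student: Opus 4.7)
\emph{Proof proposal.} The plan is to reduce the statement to Boucksom's divisorial Zariski decomposition and to exploit the rigidity it inherits from the BBF form on an IHS manifold. Every pseudo-effective class $\beta \in H^{1,1}(X,\mathbb{R})$ admits a unique decomposition $\beta = P(\beta) + N(\beta)$, where $N(\beta) = \sum_i \nu_i(\beta)[E_i]$ is an effective $\mathbb{R}$-divisor (the prime $E_i$ with $\nu_i(\beta) > 0$ forming the \emph{negative support} of $\beta$) and $P(\beta)$ is a ``modified nef'' class, which on an IHS manifold one identifies with an element of $\overline{\mathscr{BK}_X}$. The condition $\beta \notin \overline{\mathscr{BK}_X}$ is therefore equivalent to $N(\beta) \neq 0$.

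The first non-trivial input is a BBF orthogonality statement for the negative support: the $E_i$ appearing in any fixed $N(\beta)$ are prime exceptional, their Gram matrix for $q_X$ is negative definite, and in fact they are pairwise $q_X$-orthogonal. This follows from the fact that $P(\beta)$ is $q_X$-nonnegative and $q_X$-orthogonal to each $E_i$ (a standard property of the Zariski decomposition), combined with the Hodge-index signature $(1,\rho(X)-1)$ of $q_X$ on $N^1(X)_{\mathbb{R}}$ and a short induction on the number of components.

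Fix now $\alpha \in \partial \mathscr{P}\mathrm{eff}(X) \setminus \overline{\mathscr{BK}_X}$ with negative support $\{E_1,\dots,E_r\}$. The key local step is to show that there is a neighbourhood $U$ of $\alpha$ in $H^{1,1}(X,\mathbb{R})$ such that for every pseudo-effective $\beta \in U$ the negative support of $\beta$ is contained in $\{E_1,\dots,E_r\}$. Any additional prime exceptional divisor $E'$ entering $N(\beta)$ for $\beta$ close to $\alpha$ would have $\nu_{E'}(\alpha) = 0$ by lower semi-continuity of the Boucksom valuations $\nu_E$, yet $[E']$ would have to be $q_X$-orthogonal to $[E_1],\dots,[E_r]$; the non-degeneracy of $q_X$ on $H^{1,1}(X,\mathbb{R})$, together with a uniform lower bound on $|q_X(E')|$ for primitive exceptional classes, rules out arbitrarily many such $E'$ clustering near $\alpha$.

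Once this local finiteness is in hand, the assignment $\beta \mapsto (P(\beta), \nu_1(\beta), \dots, \nu_r(\beta))$ is affine-linear on $U$, and $\mathscr{P}\mathrm{eff}(X) \cap U$ is cut out by the $r$ linear inequalities $\nu_i \geq 0$ (the constraint $P(\beta) \in \overline{\mathscr{BK}_X}$ being automatic near $\alpha$ since $P(\alpha)$ lies away from $\partial \overline{\mathscr{BK}_X}$ relative to the $q_X$-complement of $\langle E_1,\dots,E_r\rangle$). This gives the local polyhedrality at $\alpha$, with extremal rays $\mathbb{R}_{\geq 0}[E_i]$. The main obstacle is precisely the local finiteness of the negative support: turning the pointwise BBF orthogonality into stability of the negative support under small perturbations of $\alpha$ is the heart of Boucksom's orthogonality theorem, and it is where the full divisorial-Zariski-decomposition machinery and the IHS/BBF structure both enter essentially; the remainder of the argument is formal cone theory.
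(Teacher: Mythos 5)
\emph{Referee's note on the proposal.} First, on scope: the paper does not prove this statement at all --- it is quoted as Theorem 3.19 of Boucksom's paper and used as a black box, so there is no internal proof to compare yours against. Your outline does reconstruct the correct framework, which is Boucksom's own: divisorial Zariski decomposition $\beta=P(\beta)+N(\beta)$, local constancy of the negative support near a boundary point $\alpha\notin\overline{\mathscr{BK}_X}$, and linearity of $\beta\mapsto N(\beta)$ on the locus where the support is fixed (there $N(\beta)$ is the $q_X$-orthogonal projection onto the span of the $E_i$).

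However, there are two genuine problems. (i) The claim that the components $E_1,\dots,E_r$ of the negative support are pairwise $q_X$-orthogonal is false: already on a $K3$ surface, two $(-2)$-curves $C_1,C_2$ with $C_1\cdot C_2=1$ form the negative support of the rigid class $C_1+C_2$ (here $P=0$), and they are not orthogonal. What is true is only that the Gram matrix of the support is negative definite (Boucksom's notion of an exceptional family); orthogonality of $P(\beta)$ to each $E_i$ plus the signature $(1,\rho(X)-1)$ yields negative (semi-)definiteness of the span, not diagonality. (ii) The step you yourself flag as ``the heart'' --- that the negative support of every pseudo-effective $\beta$ near $\alpha$ is contained in $\{E_1,\dots,E_r\}$ --- is essentially the content of the theorem, and your sketch of it does not close: it leans on the false orthogonality from (i), and on ``a uniform lower bound on $|q_X(E')|$ for primitive exceptional classes'', which is not known for an arbitrary IHS manifold (it holds for the four known deformation classes via the classification of exceptional classes, but the theorem is stated and used here for arbitrary IHS manifolds, and Boucksom's proof does not invoke such a bound). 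As written, the argument reduces the statement to the statement. A smaller point: even granting local constancy of the support, $P(\alpha)$ need not lie in the relative interior of $\overline{\mathscr{BK}_X}$ inside $\langle E_1,\dots,E_r\rangle^{\perp}$, so the assertion that the constraint $P(\beta)\in\overline{\mathscr{BK}_X}$ is ``automatic'' near $\alpha$ also requires justification.
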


The following conjecture holds for all known deformation classes of (projective) IHS manifolds (cf. \cite{Mon}, \cite{Mon1} for the O'Grady-type case, \cite{Mat} for the $\mathrm{Kum}_n$-type and K3$^{[n]}$-type case), but it is not known in general.

\begin{conj}[SYZ]\label{conj1}
Let $X$ be a projective IHS manifold and $D$ an integral, isotropic (with respect to the BBF form) divisor on $X$, such that $[D] \in \overline{\mathrm{Mov}(X)}$. Then $\mathscr{O}_X(D)$ induces a rational Lagrangian fibration, i.e.\ there exists a birational map $f\colon X \DashedArrow[->,densely dashed    ] X'$, where $X'$ is another projective IHS manifold, such that $f_*\mathscr{O}_X(D)$ induces a fibration (i.e.\ a surjective morphism with connected fibers) $X' \twoheadrightarrow B$ to a projective $\mathrm{dim}(X)/2$-dimensional base $B$.
\end{conj}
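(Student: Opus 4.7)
The approach I would take is to reduce to the nef case and then invoke a semi-ampleness result together with Matsushita's theorem on Lagrangian fibrations. First, since $[D] \in \MovX = \overline{\mathscr{BK}_X} \cap N^1(X)_{\mathbb{R}}$, the class $[D]$ lies in the closure of the union of the pull-backs $f^{*}\mathscr{K}_{X'}$ over birational maps $f$ from $X$ to other projective IHS manifolds. Using the chamber decomposition of $\MovX$ into closures of pull-backs of nef cones of birational models, I would select a birational model $X'$ such that the strict transform of $[D]$ lies in the closure of the K\"ahler cone of $X'$. Since the induced isomorphism $H^{2}(X,\mathbb{R}) \to H^{2}(X',\mathbb{R})$ is an isometry for the BBF form, this strict transform, call it $[D']$, remains integral, isotropic and is now nef.

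Once on $X'$ with a nef, integral, isotropic class $[D']$, I would invoke the ``hyperk\"ahler SYZ'' property to conclude that $\mathscr{O}_{X'}(D')$ is semi-ample. The resulting morphism $\varphi \colon X' \to B$, after Stein factorisation, has connected fibres; since $q_{X'}(D') = 0$ and $[D']$ is nef, the numerical dimension of $[D']$ equals $n = \dim(X)/2$, so $\dim B = n$. By Matsushita's theorem, any non-trivial morphism with connected fibres from an IHS manifold to a lower-dimensional normal projective base is automatically a Lagrangian fibration whose base has dimension exactly $n$. Composing with the birational map from $X$ to $X'$ would then yield the required rational Lagrangian fibration.

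The main obstacle is the semi-ampleness step, which is precisely the ``hyperk\"ahler SYZ conjecture'' and is exactly what restricts Conjecture \ref{conj1} to the known deformation classes. For the $K3^{[n]}$- and $\mathrm{Kum}_n$-type cases this is the content of Matsushita's work, which exploits the explicit realisation of such manifolds as moduli of sheaves on a $K3$ or an abelian surface and uses deformation-theoretic arguments to reduce the semi-ampleness of a nef isotropic class to a concrete fibration on a Hilbert scheme or a generalised Kummer variety. For the O'Grady-type cases the work of Mongardi and collaborators supplies the analogous result via a similar deformation strategy. Without such a case-by-case input, semi-ampleness, and therefore the conjecture itself, remains open for a general projective IHS manifold. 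A secondary but more technical point in the first step is ensuring that the chamber containing $[D]$ is indeed realised as the nef cone of some projective birational model; this uses the cone conjecture for the movable cone of an IHS manifold, which is again only available in the four known deformation classes.
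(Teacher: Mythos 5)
There is nothing in the paper to compare your argument against: the statement you were asked to prove is labelled a \emph{conjecture} (the RLF, or hyperk\"ahler SYZ, conjecture), and the paper offers no proof of it. It only records that the conjecture is known to hold for the four known deformation classes, attributing the $K3^{[n]}$- and $\mathrm{Kum}_n$-type cases and the O'Grady-type cases to the cited literature, and explicitly states that it ``is not known in general.'' Your proposal is therefore not, and cannot be, a proof of the statement as it stands; to your credit, you say as much yourself.

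That said, your outline is an accurate description of how the known cases are established and of where the genuine difficulty sits. Two remarks. First, the reduction step is actually in better shape than you suggest: the decomposition of the interior of $\overline{\mathrm{Mov}(X)}$ into chambers that are pull-backs of K\"ahler (ample) cones of birational models follows from the global Torelli theorem together with Markman's results (the ones the paper cites as \cite[Theorem 6.17, Theorem 6.25]{Mark}), and does not require the Kawamata--Morrison cone conjecture; the cone conjecture only governs finiteness of chambers modulo $\mathrm{Bir}(X)$, which is irrelevant here. One does need a little care for a class on the boundary of $\overline{\mathrm{Mov}(X)}$ (it must lie in the closure of a single chamber), but this is handled by the same circle of ideas, essentially as in the paper's Lemma \ref{lemconeconj}. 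Second, the entire weight of the statement rests on the semi-ampleness of a nef, integral, isotropic class, i.e.\ the SYZ step; Matsushita's fibration theorem then does the rest exactly as you describe. Since that step is open for a general projective IHS manifold, your argument establishes the conjecture only in the known deformation classes --- which is precisely the scope in which the paper uses it (e.g.\ in Lemma \ref{lemconeconj}), so no harm is done to the paper's results, but the ``proof'' of Conjecture \ref{conj1} itself remains conditional.
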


\begin{defn}
Let $X$ be a projective IHS manifold. We define the effective movable cone as $\overline{\mathrm{Mov}(X)}^e:=\overline{\mathrm{Mov}(X)}\cap \mathrm{Eff}(X) $ and $\overline{\mathrm{Mov}(X)}^{+}$ as the convex hull of $\overline{\mathrm{Mov}(X)} \cap N^1(X)_{\mathbf{Q}}$ in $N^1(X)_{\mathbf{R}}$.
\end{defn}

Although the following lemma may be well-known to experts, we did not find it in the literature. As it is a useful result in this context, we decided to include it in the article, with proof.

\begin{lem}\label{lemconeconj}
If $X$ is a projective IHS manifold satisfying Conjecture \ref{conj1}, we have the equality $\overline{\mathrm{Mov}(X)}^{+}=\overline{\mathrm{Mov}(X)}^{e}$. 
\begin{proof}
We start by observing that the interiors of the two involved cones coincide, hence we only have to check that everything works well at the level of the boundaries. Let $\alpha$ be an integral divisor class belonging to $\partial \overline{\mathrm{Mov}(X)}^{+}$. If $q_X(\alpha)>0$, as by \cite[Corollary 3.10]{Huy1} the class $\alpha$ is big, we are done. If $q_X(\alpha)=0$, the SYZ conjecture holds under our assumptions, so the class $\alpha$ is effective, hence the inclusion $\overline{\mathrm{Mov}(X)}^{+}\subseteq \overline{\mathrm{Mov}(X)}^{e}$ is verified. Now, let $\alpha$ be an element of $\overline{\mathrm{Mov}(X)}^e$ lying on its boundary. If $\alpha$ belongs to some wall $[E]^{\perp}$, for some prime exceptional divisor $E$, and $q_X(\alpha)>0$, we are done, because $\overline{\mathrm{Mov}(X)}$ is locally rational polyhedral away from $\partial \overline{\mathscr{C}_X}$ (cf.\ \cite[Corollary 4.8]{Den}). It remains to check the case $0 \neq \alpha \in \partial\overline{\mathscr{C}_X}$ (and so $q_X(\alpha)=0$). We can write $\alpha=\sum_{i=1}^ka_i[D_i]$, where the $D_i$ are prime divisors and the $a_i$ are positive real numbers. Let $W\subset N^1(X)_{\mathbf{R}}$ be the rational subspace spanned by the $[D_i]$. By \cite[Lemma 2.7]{Den}, for every $D_i$ we have $q_X(\alpha,D_i)\geq 0$. Moreover $q_X(\alpha)=0$, whence $q_X(\alpha,D_i)=0$ for any $i$. By linear algebra, as the signature of $q_X$ restricted to $N^1(X)_{\mathbf{R}}$ is $(1,\rho(X)-1)$, it follows that a maximal, totally isotropic linear subspace of $N^1(X)_{\mathbf{R}}$ (with respect to $q_X$) must have dimension $1$. Then $W'=\cap_{i=1}^k \left([D_i]^{\perp} \cap W\right)$ is a rational subspace of dimension 1. Indeed $W'$ contains $\alpha$, and there cannot be other elements linearly independent from $\alpha$, because otherwise, we would have a totally isotropic linear subspace of $N^1(X)_{\mathbf{R}}$ of dimension greater than or equal to $2$. We conclude that $\alpha$ is a multiple of a rational, movable class, and we are done. 
\end{proof}
\end{lem}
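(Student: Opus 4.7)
The plan is to split the analysis into the interiors of the two cones, where the equality is automatic, and their boundaries. Any interior class of $\overline{\mathrm{Mov}(X)}$ has positive BBF-square, hence is big and therefore effective, and is trivially a limit of nearby rational classes of $\overline{\mathrm{Mov}(X)}$; so the interiors of $\overline{\mathrm{Mov}(X)}^{+}$ and $\overline{\mathrm{Mov}(X)}^{e}$ coincide, and it is enough to compare the two cones on the boundary.

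For the inclusion $\overline{\mathrm{Mov}(X)}^{+} \subseteq \overline{\mathrm{Mov}(X)}^{e}$, I would take a rational generator $\alpha$ of $\overline{\mathrm{Mov}(X)}^{+}$ lying on its boundary. If $q_X(\alpha) > 0$ then $\alpha$ is big, hence effective. If $q_X(\alpha) = 0$, then $\alpha$ is an integral, isotropic, movable class, so Conjecture \ref{conj1}, which holds in all four known deformation classes by the references cited just above the lemma, produces a rational Lagrangian fibration induced by a multiple of $\mathscr{O}_X(\alpha)$; in particular $\alpha$ is effective. Taking closures yields the inclusion.

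For the reverse inclusion $\overline{\mathrm{Mov}(X)}^{e} \subseteq \overline{\mathrm{Mov}(X)}^{+}$, I would take $\alpha$ on the boundary of $\overline{\mathrm{Mov}(X)}^{e}$. Away from $\partial \overline{\mathscr{C}_X}$, the movable cone is locally rational polyhedral by \cite[Corollary 4.8]{Den}, so $\alpha$ is a limit of rational movable classes and we are done. The delicate case, and the one I expect to be the main obstacle, is $\alpha \in \partial \overline{\mathscr{C}_X}$ with $q_X(\alpha) = 0$; here I would exploit effectivity by writing $\alpha = \sum_{i=1}^{k} a_i [D_i]$ with prime divisors $D_i$ and positive reals $a_i$. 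Movability of $\alpha$ forces $q_X(\alpha, D_i) \geq 0$ for each $i$, while $q_X(\alpha, \alpha) = \sum_i a_i q_X(\alpha, D_i) = 0$ forces $q_X(\alpha, D_i) = 0$ for every $i$. Hence $\alpha$ lies in the rational subspace $W' := W \cap \bigcap_{i=1}^{k} [D_i]^{\perp}$, where $W$ is the rational span of the classes $[D_i]$. Because $q_X$ has signature $(1, \rho(X) - 1)$ on $N^1(X)_{\mathbb{R}}$, any totally isotropic subspace is $1$-dimensional, forcing $W' = \mathbb{R}\alpha$; hence $\alpha$ is a multiple of a rational class, as required.

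The only step where the restriction to the four known deformation classes enters essentially is the use of Conjecture \ref{conj1} (RLF) for isotropic integral movable classes; everything else is general IHS machinery. The signature/linear-algebra trick at the end is the cleverest point of the argument, but it is short once the effective decomposition of $\alpha$ is in hand, so I expect the real work to be marshalling the RLF input in the right form for each of the four known deformation types.
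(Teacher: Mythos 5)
Your proposal is correct and follows essentially the same route as the paper: coincidence of interiors, the RLF conjecture (known for the four deformation classes) to handle integral isotropic classes on the boundary of $\overline{\mathrm{Mov}(X)}^{+}$, local rational polyhedrality of the movable cone away from $\partial\overline{\mathscr{C}_X}$, and the same decomposition $\alpha=\sum a_i[D_i]$ combined with the signature $(1,\rho(X)-1)$ argument to show that an isotropic effective movable class on $\partial\overline{\mathscr{C}_X}$ spans a rational ray. The only cosmetic difference is that you take the coefficients $a_i$ to be positive reals rather than positive integers, which is if anything the more careful choice and does not affect the argument.
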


\begin{rmk}
Note that:
\begin{itemize}
\item $\overline{\mathrm{Mov}(X)}^{+} = \MovX^e$ for all known projective IHS manifolds.
\item The inclusion $\overline{\mathrm{Mov}(X)}^{+}\supseteq \overline{\mathrm{Mov}(X)}^{e}$ holds for any projective IHS manifold.
\end{itemize}
\end{rmk}

The Kawamata-Morrison movable cone conjecture for projective IHS manifolds predicts the existence of a fundamental domain for the action of $\mathrm{Bir}(X)$ on $\overline{\mathrm{Mov}(X)}^e$, and the theorem below by Markman is a slightly weaker version. 

\begin{thm}[\cite{Mark}, Theorem 6.25]\label{thmmark}
Let $X$ be a projective IHS manifold. There exists a rational convex polyhedral cone $\Pi$ in $\overline{\mathrm{Mov}(X)}^{+}$, such that $\Pi$ is a fundamental domain for the action of $\mathrm{Bir}(X)$ on $\overline{\mathrm{Mov}(X)}^{+}$.
\end{thm}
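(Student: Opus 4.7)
The plan is to reduce the theorem to a classical finiteness result of Looijenga (building on Ash--Mumford--Rapoport--Tai) for arithmetic group actions on convex cones, which provides a rational polyhedral fundamental domain inside the convex hull of the rational points of a cone's closure.

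The first step is to view $\mathrm{Mon}^2_{\mathrm{Hdg}}(X)$ (restricted to $N^1(X)$) as an arithmetic group containing $\mathrm{Bir}(X)$ and $W_{\mathrm{Exc}}$ as subgroups. By Huybrechts' theorem every $f \in \mathrm{Bir}(X)$ induces an element $f^{*} \in \mathrm{Mon}^2_{\mathrm{Hdg}}(X)$, and by the global Torelli theorem the resulting map $\mathrm{Bir}(X) \to O(N^1(X))$ has finite kernel. Moreover the image stabilizes $\overline{\mathrm{Mov}(X)}$, while $W_{\mathrm{Exc}}$ acts on the positive cone $\mathscr{C}_X$ as a Weyl group of reflections whose fundamental chamber is precisely $\overline{\mathrm{Mov}(X)}$. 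Hence, up to finite index, one has a short exact sequence
\[
1 \to W_{\mathrm{Exc}} \to \mathrm{Mon}^2_{\mathrm{Hdg}}(X)|_{N^1(X)} \to \mathrm{Bir}(X)|_{N^1(X)} \to 1,
\]
and the middle group is arithmetic in $O(N^1(X))$.

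Next, I would apply Looijenga's theorem to the arithmetic action of $\mathrm{Mon}^2_{\mathrm{Hdg}}(X)|_{N^1(X)}$ on the positive cone $\mathscr{C}_X$ (signature $(1, \rho(X)-1)$), producing a rational polyhedral fundamental domain inside $\mathscr{C}_X^{+} := \mathrm{Conv}(\overline{\mathscr{C}_X} \cap N^1(X)_{\mathbb{Q}})$. By the chamber description from the previous step, one may arrange this fundamental domain to lie in $\overline{\mathrm{Mov}(X)}^{+}$; the simple transitivity of $W_{\mathrm{Exc}}$ on the chambers then ensures that the same set is a fundamental domain for $\mathrm{Bir}(X)$ on $\overline{\mathrm{Mov}(X)}^{+}$, delivering the desired cone $\Pi$.

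The main obstacle is the invocation of Looijenga's theorem, a technical result relying on deep arithmetic reduction theory (toroidal compactifications of locally symmetric varieties). A related concrete subtlety is why one must pass from $\overline{\mathrm{Mov}(X)}$ to $\overline{\mathrm{Mov}(X)}^{+}$: the movable cone can accumulate at irrational isotropic rays on $\partial \mathscr{C}_X$ which obstruct rational polyhedrality of any fundamental domain on $\overline{\mathrm{Mov}(X)}$ itself, and the ``plus'' construction is designed precisely to discard such rays and make the conclusion available.
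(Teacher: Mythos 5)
The paper does not actually prove this statement: it is quoted verbatim from Markman's survey (\cite{Mark}, Theorem 6.25), so there is no internal proof to compare against. Your sketch reproduces the strategy of Markman's original argument --- realize the image of $\mathrm{Mon}^2_{\mathrm{Hdg}}(X)$ in $O(N^1(X))$ as a finite-index (hence arithmetic) subgroup that splits as $W_{\mathrm{Exc}}\rtimes \mathrm{Bir}(X)$ up to a finite kernel, identify $\overline{\mathrm{Mov}(X)}$ as the fundamental exceptional chamber for $W_{\mathrm{Exc}}$, and then invoke Looijenga's rational polyhedral fundamental domain theorem --- and it is correct at this level of detail; the one step to phrase carefully is the claim that one can ``arrange'' the fundamental domain to lie in $\overline{\mathrm{Mov}(X)}^{+}$: rather than translating a fundamental domain for the big group into the chamber after the fact, one applies Looijenga's result in the form tailored to a $\Gamma$-invariant chamber decomposition (his Application 4.14), which directly produces a rational polyhedral fundamental domain for the stabilizer $\mathrm{Bir}(X)$ of the chamber $\overline{\mathrm{Mov}(X)}^{+}$.
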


\begin{rmk}\label{Kawconconj}
If $X$ is a projective IHS manifold belonging to one of the known deformation classes, combining Lemma \ref{lemconeconj} and Theorem \ref{thmmark}, one deduces that $\Pi$ is also a fundamental domain for the action of $\mathrm{Bir}(X)$ on $\overline{\mathrm{Mov}(X)}^{e}$.
\end{rmk}

\subsection{Some hyperbolic geometry}
Consider a real vector space $V$ of dimension $m + 1$ endowed with a nondegenerate quadratic form $q$ of signature $(1, m)$. Thus, abstractly, $(V,q)$ is isomorphic to $\mathbf{R}^{m+1}$ with the quadratic form $x_0^2-x_1^2-\dots-x_m^2$. The set 
\[
\{x \in V | q(x) > 0\}
\] 
has two connected components which are interchanged by $x\to -x$. We usually distinguish one of the two connected components, say $\mathscr{C} \subset V$, and call it the positive cone. Thus,
\[
\{x \in V \;|\; q(x)>0\}=\mathscr{C} \sqcup \left(-\mathscr{C}\right).
\]
We write $\text{O}(V)$ for the orthogonal group $\text{O}(V,q)$, which is abstractly isomorphic to $\text{O}(1,m)$. We denote by $\text{O}^+(V)\subset \text{O}(V)$  the index two subgroup of transformations preserving the positive cone $\mathscr{C}$.
 \vspace{0.2cm}
 
 The subset $\mathscr{C}(1)$ of all $x \in \mathscr{C}$ with $q(x)=1$ is isometric to the hyperbolic $m$-space
\[
\mathbf{H}^m := \{x \in \mathbf{R}^{m+1} \; | \; x_0^2-x_1^2-\dots -x_m^2=1, \; x_0>0\}.
\] 
By writing
$\mathscr{C} \cong \mathscr{C}(1) \times \mathbf{R}_{>0}\cong \mathbf{H}^m \times \mathbf{R}_{>0},$
questions concerning the geometry of $\mathscr{C}$ can be reduced to analogous ones for $\mathbf{H}^m$. We can equivalently view $\mathbf{H}^n$ as the projectivization of $\mathscr{C}$, denoted by $\mathbf{P}(\mathscr{C})$ and called the \emph{projectivized positive cone} (this terminology follows the one adopted by Amerik and Verbitsky in their works). Then we can compactify the hyperbolic space $\mathbf{H}^m$ by adding its points at infinity, namely 
\[
\partial \mathbf{H}^m:=\{x \in \mathbf{P}(V) \; | \; x_0^2-x_1^2-\dots -x_m^2=0\}.
\]
Then $\overline{\mathbf{H}^m}=\mathbf{H}^m \cup \partial \mathbf{H}^m$ is a compactification for $\mathbf{H}^m$. The interior (resp.\ boundary) of $\overline{\mathbf{H}^m}$ is $\mathbf{H}^m$ (resp.\ $\partial \mathbf{H}^m$). For an excellent account of hyperbolic geometry, we refer the reader to \cite{Martelli23}.
 
 \begin{defn}
 Let $\Gamma$ be a non-trivial discrete group of isometries of $\mathbf{H}^m$. The limit set $\Lambda(\Gamma) \subset \partial \mathbf{H}^m$ of $\Gamma$ is the set of all the accumulation points of the orbit $\Gamma\cdot x$ in $\mathbf{H}^m$ of any point $x \in \mathbf{H}^m$.
 \end{defn}
 
 Note that the limit set does not depend on the choice of the point (cf.\ \cite[Exercise 5.1.1]{Martelli23}).
 
\begin{defn}
Let $\Gamma$ be a non-trivial discrete group of isometries of $\mathbf{H}^m$. We say that $\Gamma$ is elementary if its limit set consists of at most two points.
\end{defn}

 Let $X$ be a projective IHS manifold of Picard number $m+1$, and $q_X$ the Beauville-Bogomolov-Fujiki (BBF) form on $X$. The situation above is represented by the couple $(N^1(X)_{\mathbf{R}},q_X)$. It is known that $\mathrm{Mon}^2_{\mathrm{Hdg}}(X)$ acts on $\mathrm{Pic}(X)$ via isometries, and preserves the positive cone $\mathscr{C}_X\cap N^1(X)_{\mathbf{R}}$ (which by abuse of notation we still denote by $\mathscr{C}_X$) of $X$. In particular, we have the natural map
 \[
 \rho \colon \mathrm{Mon}^2_{\mathrm{Hdg}}(X)\to \text{O}^+(\mathrm{Pic}(X)).
 \] 
 Let $\Gamma' \subset \text{O}^+(\mathrm{Pic}(X))$ be the image of $\rho$. Markman proved that $\Gamma'$ is of finite index inside $\text{O}^+(\mathrm{Pic}(X))$ (cf.\ \cite[Lemma 6.23]{Mark}). We denote by $\mathbf{P}(\mathscr{C}_X)$ the projectivized positive cone of $X$. By the discussion above, $\mathbf{P}(\mathscr{C}_X)$ is a finite-dimensional hyperbolic space, and $\Gamma'$ is a non-trivial discrete group of isometries of $\mathbf{P}(\mathscr{C}_X)$.
\subsection{Pell's equations}
A Pell equation is any diophantine equation of the form 
\begin{equation}\label{pelleqn1}
x^2-ky^2=1,
\end{equation}
 where $k$ is a given positive integer, and $k$ is not a square.
The fundamental solution of equation (\ref{pelleqn1}) is the (positive) solution $(x_1,y_1)$ minimizing $x$. Once the fundamental solution is known, all positive solutions (and hence all solutions) can be calculated recursively using the formulas
\begin{equation*}
\begin{aligned}
x_{j+1}&=x_1x_j+ky_1y_j  \\
y_{j+1}&=x_1y_j+y_1x_j.
\end{aligned}
\end{equation*}
These recursive formulas are derived from \cite[Proposition 17.5.2]{Ireland}. This is all we need from the theory of Diophantine equations. For an account, we refer the reader to \cite[Chapter 17]{Ireland}.

\section{Proof of the main Theorem and consequences}\label{Section2}

In this section, we prove the main theorem of the article, namely Theorem \ref{thmden}. We start by deducing Corollary \ref{cor1}, which characterizes the pseudo-effective cone of a projective IHS manifold of Picard number 2, from Oguiso's theorem \cite[Theorem 1.3, item \textit{(2)}]{Ogu}.

\begin{proof2}
By Theorem \ref{thmogu} and the duality  $\EffX={\MovX}^{*}$ (cf.\ \cite[Lemma 2.7]{Den}), either both the extremal rays of $\EffX$ are irrational and $\mathrm{Bir}(X)$ is infinite, or rational and $\mathrm{Bir}(X)$ is finite.
\end{proof2}

\begin{rmk}
We observe that under the assumptions of Corollary \ref{cor1} if $X$ satisfies Conjecture \ref{conj1} (for example if $X$ belongs to one of the known deformation classes), whenever $\EffX$ is rational we have $\EffX=\mathrm{Eff}(X)$, i.e. both the extremal rays of the pseudo-effective cone are spanned by the classes of some integral, effective divisors. Also, note that both the cases in Corollary \ref{cor1} do occur (see for example \cite[Lemma 3.2]{Ulrike} and \cite[Proposition 5.3]{Ogu}).
\end{rmk}

\begin{proof1}\label{proof1}

\underline{Suppose that $\mathrm{Neg}(X)=\emptyset$}. In this case $\overline{\mathrm{Eff}(X)}=\overline{\mathscr{C}_X}$, hence $\EffX$ is circular.

\underline{Suppose that $\mathrm{Neg}(X) \neq \emptyset$}. 
We first show that $\overline{\mathrm{Eff}(X)}$ does not contain any circular part. Let $E$ be any prime exceptional divisor on $X$. We recall that $\Gamma'$ is the image of
\[
 \rho \colon \mathrm{Mon}^2_{\mathrm{Hdg}}(X)\to \text{O}^+(\mathrm{Pic}(X)).
 \] 
 The orbit of $E$ under the action of $\Gamma'$ is made (up to a sign) of stably exceptional divisors (i.e. $\mathscr{O}_X(E)$ is a stably exceptional line bundle), and the walls $\{g(E)^{\perp}\}_{g\in \Gamma'}$ intersect $\mathscr{C}_X$.
Indeed, the BBF square of $E'$ is negative, where $E'=g(E)$ for some $g \in \Gamma'$. Let $y$ be any class in $\mathscr{C}_X$. Consider the class
\[
y':=y-\frac{q_X(E',y)}{q_X(E')}[E'].
\]
Clearly, $y'\in (E')^{\perp}$, and $q_X(y')=q_X(y',y)>0$ (cf. \cite[Proposition 2.14]{Den}), thus $y'\in E'^{\perp} \cap \mathscr{C}_X$. Suppose by contradiction that $\overline{\mathrm{Eff}(X)}$ contains a circular part $\mathcal{C}$. We can assume that $\mathcal{C}= \mathbf{R}^{>0} \mathcal{C}$. By Theorem \ref{thmbouck} and equality (\ref{equality1}), the circular part $\mathcal{C}$ must be contained in $\partial \MovX \cap \partial \EffX$. Also, $\MovX$ is locally rational polyhedral away from the boundary of $\mathrm{Big}(X)$ (cf. \cite[Corollary 4.8]{Den}), hence $\mathcal{C} \subset \partial \overline{\mathscr{C}_X} \cap \partial \MovX$. Let $x$ be a point lying in $E^{\perp}\cap \partial\overline{\mathscr{C}_X}$. The quotient $\mathbf{P}(\mathscr{C}_X)/\Gamma'$ is a hyperbolic manifold, and since $\Gamma'$ is of finite index in $\text{O}^+(\mathrm{Pic}(X))$ and $\rho(X)\geq 3$, it is of finite volume (cf. \cite[Section 3.2]{article}). It follows by \cite[Proposition 5.1.9]{Martelli23} that $\Gamma'$ is non-elementary, and so, by \cite[Theorem 1.29]{Seade}, $\Gamma'$ acts with dense orbits on $\partial \mathbf{P}(\mathscr{C}_X)$. The rays $\{\mathbf{R}^{> 0}(g(x))\}_{g\in \Gamma'}$ are contained in the boundary of the positive cone. Then the set of rays $\mathbf{R}^{> 0}(g(x))$ (with $g\in \Gamma'$) contained in $\mathcal{C}$ form a dense subset of the circular part $\mathcal{C}$, and any of them is orthogonal to the corresponding $g(E)$. This implies the existence of a class lying in $\mathrm{Int}(\overline{\mathrm{Mov}(X)})$, $q_X$-intersecting trivially some stably exceptional divisor, and this is a contradiction because any element in the interior of the movable cone intersects any stably exceptional divisor positively (cf. \cite[Proposition 6.10]{Mark}). Then $\overline{\mathrm{Eff}(X)}$ has no circular part.
 Now, using that $\EffX$ does not contain any circular part, we show the equality $\EffX=\overline{\sum_{E \in \mathrm{Neg}(X)}\mathbf{R}^{\geq 0} [E]}$. To do so, as $\EffX$ does not contain lines (see for example \cite[Proposition 3.2]{Den}), by Minkowski's Theorem on closed convex cones not containing lines, it suffices to show that $\overline{\sum_{E \in \mathrm{Neg}(X)}\mathbf{R}^{\geq 0} [E]}$ contains all the extremal rays of $\EffX$. Suppose by contradiction that there exists an extremal ray $R$ of $\EffX$ not belonging to $\overline{\sum_{E \in \mathrm{Neg}(X)}\mathbf{R}^{\geq 0} [E]}$. Pick a non-zero element $v \in R$ (which must therefore  satisfy $q_X(v)=0$) and consider the hyperplane
\[
H:=\{x \in N^1(X)_{\mathbf{R}} \; | \; q_X(x,A)=q_X(v,A)\},
\] 
where $A$ is any ample divisor. Clearly, $Q:=H \cap \partial \overline{\mathscr{C}_X}$ is a smooth compact quadric hypersurface in $H$. Indeed $q_X(v,A)>0$ (see for example \cite[Proposition 3.2]{Den})), hence $H$ does not intersect $-\partial\overline{\mathscr{C}_X}$, so that $H \cap \partial \overline{\mathscr{C}_X}$ is non-degenerate. Moreover, up to a rescaling, any element $y \in \partial\overline{\mathscr{C}_X}$ is such that $q_X(y,A)=q_X(v,A)$, hence $Q$ must be compact. Let $N$ be the closure of the convex hull in $H$ of all the elements $y\in H$ belonging to an extremal ray of the type $\mathbf{R}^{\geq 0} [E]$, where $E$ is a prime exceptional divisor of $X$. The subset $N$ of $H$ is compact and convex. As $v \in Q \setminus N$, by Lemma \ref{lemKov}, there exists an open subset $U$ of $Q$ contained in $\partial(\mathrm{Conv}(Q \cup N))$, where $\mathrm{Conv}(Q \cup N)$ is the convex hull of $Q \cup N$. Then $\mathbf{R}^{\geq 0}U$ is a circular part of $\EffX$, and this contradicts the fact that $\EffX$ does not contain circular parts.
\end{proof1}

\begin{rmk}
For a somehow similar statement about the nef cone see \cite{Mat18}.
\end{rmk}

\begin{cor}\label{cor4}
Let $X$ be a projective IHS manifold of Picard number at least $3$, carrying a prime exceptional divisor. Then $\mathrm{card}(\mathrm{Neg}(X))\geq \rho(X)$ and $\EffX$ is rational polyhedral if and only if $\mathrm{card}(\mathrm{Neg}(X))$ is finite.
\begin{proof}
By Theorem \ref{thmden} we have, $\overline{\mathrm{Eff}(X)}=\overline{\sum_{E \in \mathrm{Neg}(X)}\mathbf{R}^{\geq 0}[E]}$, and the latter equality is satisfied only if $\#\mathrm{Neg}(X)\geq \rho(X)$, because the interior of $\overline{\sum_{E \in \mathrm{Neg}(X)}\mathbf{R}^{\geq 0}[E]}$ (which is non-empty) equals the interior of $\sum_{E \in \mathrm{Neg}(X)}\mathbf{R}^{\geq 0}[E]$. The proof of the second part of the statement is clear.
\end{proof}
\end{cor}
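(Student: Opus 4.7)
The plan is to derive both assertions as immediate consequences of Theorem \ref{thmden}. Under the hypotheses $\rho(X)\geq 3$ and $\mathrm{Neg}(X)\neq \emptyset$, Theorem \ref{thmden} yields the identification
\[
\EffX = \overline{\sum_{E \in \mathrm{Neg}(X)} \mathbb{R}^{\geq 0}[E]}.
\]

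For the lower bound $\mathrm{card}(\mathrm{Neg}(X)) \geq \rho(X)$, I would argue as follows. The cone $\EffX$ contains the ample cone, so it has non-empty interior in the $\rho(X)$-dimensional space $N^1(X)_{\mathbb{R}}$. Since the interior of a convex cone with non-empty interior coincides with the interior of its closure, the (un-closed) cone $\sum_{E \in \mathrm{Neg}(X)} \mathbb{R}^{\geq 0}[E]$ must itself have non-empty interior. But any sum of fewer than $\rho(X)$ rays is contained in a proper linear subspace of $N^1(X)_{\mathbb{R}}$, hence has empty interior. So there must be at least $\rho(X)$ prime exceptional divisors.

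For the equivalence, the direction ``$\mathrm{Neg}(X)$ finite $\Rightarrow \EffX$ rational polyhedral'' is immediate: a finite sum of rational rays is already closed and rational polyhedral, so the displayed identification above reduces to $\EffX = \sum_{E \in \mathrm{Neg}(X)} \mathbb{R}^{\geq 0}[E]$. Conversely, suppose $\EffX$ is rational polyhedral. Then it has finitely many extremal rays. By Theorem \ref{thmbouck} combined with the equality (\ref{equality1}), every prime exceptional divisor $E$ spans an extremal ray of $\EffX$, and distinct prime exceptional divisors span distinct extremal rays (a prime exceptional divisor is rigid, its complete linear system being a singleton, so the class $[E]$ determines $E$ uniquely). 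Thus $\mathrm{Neg}(X)$ injects into the finite set of extremal rays and must be finite.

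The argument is essentially bookkeeping once Theorem \ref{thmden} is in hand, so there is no real obstacle; the only subtlety is the uniqueness statement in the converse direction, which however follows from the standard fact that prime exceptional divisors are rigid.
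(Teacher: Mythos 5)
Your proof is correct and follows essentially the same route as the paper: the lower bound $\mathrm{card}(\mathrm{Neg}(X))\geq\rho(X)$ is obtained exactly as in the paper (interior of the closure equals interior of the cone, and fewer than $\rho(X)$ rays span a proper subspace), while for the equivalence the paper merely declares the second part ``clear'' and you supply the standard details (a finitely generated rational cone is closed and rational polyhedral; conversely extremality via Theorem \ref{thmbouck} plus rigidity of prime exceptional divisors bounds $\mathrm{Neg}(X)$ by the finitely many extremal rays). No gaps.
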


\begin{ex}
Let $S$ be a projective K3 surface of Picard number $2$, carrying one (and only one) smooth rational curve. Then, by Corollary \ref{cor4}, $S^{[n]}$ carries at least one prime exceptional divisor coming neither from $S$, nor from the desingularization of $S^{(n)}$, for any $n$.
\end{ex}

\begin{cor}
Let $X$ be a projective IHS manifold of Picard number greater than 3, carrying a prime exceptional divisor. Then the nef cone $\mathrm{Nef}(X)$ has no circular part. 
\end{cor}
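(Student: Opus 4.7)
The strategy is to argue by contradiction: assuming $\mathrm{Nef}(X)$ has a circular part $\mathcal{C}$, I would extract from it a circular part of $\EffX$, contradicting Theorem \ref{thmden}. The two key tools are the local rational polyhedrality of $\mathrm{Nef}(X)$ inside $\mathscr{C}_X$ (the nef-cone analogue of \cite[Corollary 4.8]{Den}, obtained by the same local finiteness argument for wall classes) and the BBF duality $\EffX = \overline{\mathrm{Mov}(X)}^{*}$ used in the proof of Corollary \ref{cor1}.

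First I would localize the putative circular part. Local polyhedrality of $\mathrm{Nef}(X)$ inside $\mathscr{C}_X$ forces $\mathcal{C} \subseteq \partial \overline{\mathscr{C}_X}$ and, after shrinking, $\mathcal{C}$ may be assumed open in $\partial \overline{\mathscr{C}_X}$. Since $\mathrm{Nef}(X) \subseteq \MovX$, this gives $\mathcal{C} \subseteq \MovX$. The hyperplanes $[E]^{\perp}$, for $E$ a prime exceptional divisor, form a countable family of codimension-one subsets of $\partial \overline{\mathscr{C}_X}$, and $\dim \partial \overline{\mathscr{C}_X} = \rho(X) - 1 \geq 3$, so one can pick $v \in \mathcal{C}$ with $q_X(v, [E]) > 0$ for every prime exceptional $E$.

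The crux of the argument is to show that $\EffX$ coincides with $\overline{\mathscr{C}_X}$ in a neighborhood of $v$. Given $w = v + \epsilon u \in \EffX$ with $\epsilon > 0$ small, the BBF duality together with $\mathcal{C} \subseteq \MovX$ yields $q_X(w, v') \geq 0$ for every $v' \in \mathcal{C}$. Testing with $v' = v$ gives $q_X(u, v) \geq 0$. If the inequality is strict, then $q_X(w) > 0$ for small $\epsilon$ and hence $w \in \mathscr{C}_X$. If instead $q_X(u, v) = 0$ (so $u \in v^{\perp}$), I would parameterize nearby null classes inside $\mathcal{C}$ as $v' = v + u' + O(|u'|^{2})$ with $u' \in v^{\perp}$, and test the inequality against both $\pm u'$; this forces the linear form $q_X(w, \cdot)$ to vanish identically on the hyperplane $v^{\perp}$, and by nondegeneracy of the BBF form, $w \in \mathbb{R} v \subseteq \overline{\mathscr{C}_X}$. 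In either case $w \in \overline{\mathscr{C}_X}$, so $\partial \EffX$ locally coincides near $v$ with the smooth quadric $\partial \overline{\mathscr{C}_X}$, which is nowhere locally polyhedral. This produces a circular part of $\EffX$ at $v$, contradicting Theorem \ref{thmden}.

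I expect the main technical point to be the case $q_X(u, v) = 0$: one needs the second-order parameterization of nearby null classes to actually land inside $\mathcal{C}$ and to preserve the first-order inequality extracted for $q_X(w, u')$. Both the openness of $\mathcal{C}$ in $\partial \overline{\mathscr{C}_X}$ and the genericity of $v$ off every wall $[E]^{\perp}$ should be used here, and the hypothesis $\rho(X) > 3$ ensures enough room on the null quadric to make such a generic choice possible.
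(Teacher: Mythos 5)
Your strategy is genuinely different from the paper's: the paper disposes of this in two lines, observing that $\mathrm{Nef}(X)$ is locally rational polyhedral away from $\partial\overline{\mathscr{C}_X}$ by Kawamata's cone theorem and that $\MovX$ (hence its subcone $\mathrm{Nef}(X)$ along $\partial\overline{\mathscr{C}_X}$) has no circular part by Theorem \ref{thmden}, whereas you transfer the circular part directly to $\EffX$ via the duality $\EffX=\MovX^{*}$. The set-up (localizing $\mathcal{C}$ to $\partial\overline{\mathscr{C}_X}$, openness in the quadric by invariance of domain, testing pseudo-effective classes against nearby null classes of $\mathcal{C}$) is sound, and your computation in the case $q_X(u,v)=0$ correctly shows that $q_X(w,\cdot)$ vanishes on $v^{\perp}$, hence that $\EffX\cap v^{\perp}=\mathbb{R}^{\geq 0}v$ for \emph{every} $w\in\EffX$ with $q_X(w,v)=0$ (the genericity of $v$ off the walls $[E]^{\perp}$ is in fact never used there).

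The gap is in the conclusion you draw. In the case $q_X(u,v)>0$ you get $q_X(w)>0$ only for $\epsilon<2q_X(u,v)/|q_X(u)|$, a threshold that degenerates as the direction $u$ approaches $v^{\perp}$; so the two cases together do \emph{not} yield a uniform ball $B(v,\delta)$ with $\EffX\cap B(v,\delta)\subseteq\overline{\mathscr{C}_X}$, and the claim that $\partial\EffX$ ``locally coincides near $v$ with the smooth quadric'' is unproved. It may even be false: nothing in your argument excludes that the rays $\mathbb{R}^{\geq 0}[E]$ of prime exceptional divisors accumulate onto $\mathbb{R}^{\geq 0}v$, in which case $\EffX$ contains points arbitrarily close to $v$ of negative square. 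What you have actually proved --- that each $v\in\mathcal{C}$ spans an \emph{exposed} extremal ray of $\EffX$, exposed by $v^{\perp}$ --- does suffice, but it requires a different closing step: a cone that is locally polyhedral at $v$ has only finitely many one-dimensional faces through a neighbourhood of $v$, while $\mathcal{C}$ supplies uncountably many exposed rays through every such neighbourhood, so $\mathcal{C}$ is a circular part of $\EffX$. Finally, note that the contradiction is with the assertion, established in the \emph{proof} of Theorem \ref{thmden}, that $\EffX$ has no circular part when $\mathrm{Neg}(X)\neq\emptyset$; the stated equality $\EffX=\overline{\sum_{E}\mathbb{R}^{\geq 0}[E]}$ alone does not formally exclude a circular part consisting of limits of exceptional rays, so you should cite that step of the proof explicitly (as the paper itself implicitly does).
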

\begin{proof}
By Theorem \ref{thmden}, $\overline{\mathrm{Mov}(X)}$ has not circular parts, and $\mathrm{Nef}(X)$ is rational polyhedral away from the boundary of $\mathscr{C}_X$, thanks to a result of Kawamata (cf. \cite[Theorem 5.7]{Kaw}), hence also $\mathrm{Nef}(X)$ does not have any circular part. 
\end{proof}

In the proof of Corollary \ref{thm2} we will need a theorem by Burnside and a result of Oguiso.

\begin{thm}[\cite{Burn}, Main Theorem]\label{burnthm}
Let $G$ be a subgroup of $\mathrm{GL}(n,\mathbf{C})$. Assume that there exists a positive integer $d$
such that any element of $G$ has order at most $d$. Then $G$ is a finite group.
\end{thm}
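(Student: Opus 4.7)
The plan is to prove the theorem by induction on $n$, using two classical ingredients: the fact that in characteristic zero a bounded exponent forces diagonalizability, and Burnside's theorem that an irreducibly acting subalgebra of $M_n(\mathbb{C})$ is all of $M_n(\mathbb{C})$. First I would set $N := \mathrm{lcm}(1,2,\ldots,d)$, so that every $g\in G$ satisfies $g^N = I$. Since $X^N-1$ has distinct roots in $\mathbb{C}$, each such $g$ is diagonalizable with eigenvalues in the finite set $\mu_N$ of $N$-th roots of unity. In particular the characteristic polynomial, and therefore the trace, of any $g\in G$ lies in a finite subset $T\subset\mathbb{C}$ depending only on $n$ and $d$.

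For the base case $n=1$, $G\subset \mathbb{C}^{*}$ consists of $N$-th roots of unity, hence is contained in $\mu_N$ and is finite. For the inductive step, I would split according to whether $G$ acts irreducibly on $V=\mathbb{C}^{n}$. If there is a proper $G$-invariant subspace $W\subsetneq V$, then the induced representations $G\to GL(W)$ and $G\to GL(V/W)$ still have every element of order $\le d$, so their images are finite by the inductive hypothesis. The kernel $K$ of the combined map $G\to GL(W)\times GL(V/W)$ consists, in a basis adapted to $W$, of upper-triangular matrices with identity diagonal blocks, so its elements are unipotent; combined with $g^N=I$ and the diagonalizability noted above, every element of $K$ must equal $I$. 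Thus $G$ embeds into a finite group and is finite.

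It remains to handle the irreducible case. By Burnside's theorem on irreducible representations, the $\mathbb{C}$-span of $G$ is the whole matrix algebra $M_n(\mathbb{C})$, so one can select $g_1,\ldots,g_{n^{2}}\in G$ that form a $\mathbb{C}$-basis of $M_n(\mathbb{C})$. I would then consider the trace map
\[
\Phi\colon G\longrightarrow T^{n^{2}},\qquad g\longmapsto \bigl(\mathrm{tr}(gg_{1}),\ldots,\mathrm{tr}(gg_{n^{2}})\bigr).
\]
Each coordinate lies in the finite set $T$, so the image is finite; and since $(A,B)\mapsto \mathrm{tr}(AB)$ is a nondegenerate bilinear form on $M_n(\mathbb{C})$, the map $\Phi$ is injective. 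Hence $G$ is finite.

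The main obstacle is the irreducible case: the trick of using the trace pairing to embed $G$ into $T^{n^{2}}$ relies crucially on Burnside's spanning theorem, and without that input one would be forced into heavier character-theoretic machinery. A subtler but easily overlooked point is the collapse of the kernel in the reducible case, which works only because the bounded-exponent hypothesis kills unipotents in characteristic zero; dropping this would break the inductive step.
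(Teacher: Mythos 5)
The paper does not actually prove this statement: it is imported verbatim from \cite{Burn} as a known classical result, so there is no internal proof to compare yours against. That said, your argument is the standard proof of Burnside's bounded-exponent theorem and it is correct: the bounded exponent forces every element to be diagonalizable with eigenvalues in $\mu_N$, hence traces in a finite set $T$; the induction on $n$ through an invariant subspace works because the kernel of $G\to GL(W)\times GL(V/W)$ consists of unipotent elements of finite order, which in characteristic zero must be the identity; and in the irreducible case Burnside's spanning theorem together with the nondegeneracy of the trace pairing gives the injective map $G\to T^{n^{2}}$. The only point worth tightening is in the reducible branch: you should take $W$ proper \emph{and nonzero}, so that both $\dim W$ and $\dim (V/W)$ are strictly smaller than $n$ and the induction genuinely reduces the dimension; as phrased, $W=0$ is formally allowed and would make the step vacuous.
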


\begin{lem}[\cite{Ogu}, Corollary 2.6]\label{lemogu}
Let $Y$ be a smooth projective variety with trivial canonical bundle, such that $h^1(Y,\mathscr{O}_Y)=0$. Moreover, let $r\colon \mathrm{Bir}(Y) \to \mathrm{GL}(N^1(Y)_{\mathbf{R}})$ be the natural representation of $\mathrm{Bir}(Y)$. Then the following hold.
\begin{enumerate}
\item $[\mathrm{Bir}(Y):\mathrm{Aut}(Y)]=[r(\mathrm{Bir}(Y)):r(\mathrm{Aut}(Y))]$.
\item If $G$ is a subgroup of $\mathrm{Bir}(Y)$, $G$ is finite if and only if there is a positive
integer $d$ such that every element of $r(G)$ as order at most $d$.
\end{enumerate}
\end{lem}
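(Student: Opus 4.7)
The strategy is to reduce both parts to two facts about the kernel $K := \ker(r)$: first, that $K$ is contained in $\mathrm{Aut}(Y)$; second, that $K$ itself is a finite group. Once these are established, item (1) follows because the canonical map $\mathrm{Bir}(Y)/\mathrm{Aut}(Y) \to r(\mathrm{Bir}(Y))/r(\mathrm{Aut}(Y))$ is tautologically surjective, and is injective thanks to $K \subseteq \mathrm{Aut}(Y)$: indeed, if $r(f) = r(g)$ for some $g \in \mathrm{Aut}(Y)$, then $fg^{-1} \in K \subseteq \mathrm{Aut}(Y)$, and hence $f \in \mathrm{Aut}(Y)$. For item (2), one direction is trivial; for the converse, if every element of $r(G)$ has order at most $d$, Burnside's theorem (Theorem \ref{burnthm}) forces $r(G)$ to be finite, and combined with the finiteness of $K \cap G \subseteq K$, the short exact sequence $1 \to K \cap G \to G \to r(G) \to 1$ yields the finiteness of $G$.

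The inclusion $K \subseteq \mathrm{Aut}(Y)$ is the heart of the argument. Given any $f \in \mathrm{Bir}(Y)$, a standard negativity argument on a common resolution of $f$, using $K_Y \cong \mathscr{O}_Y$, shows that $f$ is an isomorphism in codimension one; combined with the smoothness of $Y$ and Hartogs-type extension for Cartier divisors, this makes the pullback $f^\ast\colon \mathrm{Pic}(Y) \to \mathrm{Pic}(Y)$ well-defined. Now assume $f \in K$ and fix an ample divisor $H$. Then $f^\ast H - H$ is numerically trivial, and since $h^1(\mathscr{O}_Y) = 0$ forces $\mathrm{Pic}^0(Y) = 0$, the class $f^\ast H - H$ is a torsion element of $\mathrm{Pic}(Y)$. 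Replacing $H$ by a sufficiently large multiple $mH$, we may assume $mH$ is very ample and $f^\ast(mH) \sim mH$ in $\mathrm{Pic}(Y)$. The complete linear system $|mH|$ then yields a closed embedding $\iota\colon Y \hookrightarrow \mathbb{P}^N$, and the isomorphism $f^\ast(mH) \cong mH$ induces an isomorphism $f^\ast\colon H^0(Y,mH) \xrightarrow{\sim} H^0(Y,mH)$ and thus a linear automorphism $\phi$ of $\mathbb{P}^N$ satisfying $\iota \circ f = \phi \circ \iota$ as rational maps. Since the right-hand side is a morphism, so is $f$, and therefore $f \in \mathrm{Aut}(Y)$.

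The finiteness of $K$ rests on the structural result of Lieberman-Fujiki: on a smooth projective variety, the kernel of $\mathrm{Aut}(Y) \to \mathrm{GL}(N^1(Y))$ is a finite extension of the identity component $\mathrm{Aut}^0(Y)$, so it suffices to show that $\mathrm{Aut}^0(Y)$ is trivial. Its Lie algebra is $H^0(Y, T_Y)$, and Serre duality combined with $K_Y \cong \mathscr{O}_Y$ gives $H^0(Y, T_Y) \cong H^n(Y, \Omega^1_Y)^\vee = H^{n,1}(Y)^\vee$. By Hodge symmetry, $h^{n,1}(Y) = h^{1,n}(Y) = h^1(Y, \Omega^n_Y) = h^1(Y, \mathscr{O}_Y) = 0$, so $H^0(Y, T_Y) = 0$, the connected component $\mathrm{Aut}^0(Y)$ is trivial, and $K$ is finite.

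The main obstacle is the regular extension step: going from a birational element of $K$ to a bona fide automorphism requires handling, simultaneously, the codimension-one isomorphism property (needing $K_Y$ trivial), the passage from numerical to linear equivalence (needing $h^1(\mathscr{O}_Y) = 0$), and the reconstruction of $f$ via a very ample complete linear system. The remainder is straightforward group-theoretic bookkeeping, combined with Burnside's theorem and the Lieberman-Fujiki structure result.
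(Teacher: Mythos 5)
The paper does not prove this statement: it is imported verbatim as \cite[Corollary 2.6]{Ogu}, so there is no internal proof to compare against. Your argument is a correct, self-contained reconstruction along the standard lines, and it isolates the right two facts: that $K=\ker(r)$ lies in $\mathrm{Aut}(Y)$ (via the isomorphism-in-codimension-one property for $K$-trivial smooth varieties, the vanishing $\mathrm{Pic}^0(Y)=0$ from $h^1(\mathscr{O}_Y)=0$ to upgrade numerical to linear equivalence up to torsion, and the reconstruction of $f$ through a very ample complete linear system), and that $K$ is finite (via Lieberman--Fujiki together with $h^0(Y,T_Y)=h^{n,1}(Y)=h^1(Y,\mathscr{O}_Y)=0$). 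The deduction of item (1) is sound, though note that $\mathrm{Aut}(Y)$ need not be normal in $\mathrm{Bir}(Y)$, so the map you invoke is a bijection of coset spaces rather than of quotient groups; your injectivity argument only uses cosets, so nothing breaks. Item (2) then follows exactly as you say from Burnside's theorem applied to $r(G)\subset \mathrm{GL}(N^1(Y)_{\mathbb{R}})\subset \mathrm{GL}(\rho(Y),\mathbb{C})$ and the finiteness of $K\cap G$. This is essentially the same route Oguiso takes (his Corollary 2.6 rests on his Proposition 2.4, which is precisely your two claims about the kernel), so no genuinely different method is involved.
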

\begin{proof3}
We start by proving \textit{(1)} $\Leftrightarrow$ \textit{(2)}. If $\rho(X)=2$ we are done, by Corollary \ref{cor1}. If $\rho(X)\geq 3$, consider the natural representation 
\[
f \colon \mathrm{Bir}(X) \to \mathrm{GL}(N^1(X)_{\mathbf{R}}). 
\]

 We first prove \textit{(1)} $\Rightarrow$ \textit{(2)}. As $\mathrm{Eff}(X)$ is rational polyhedral, $X$ carries a finite number $N$ of prime exceptional divisors, by Corollary \ref{cor4}. In particular, $\mathrm{Eff}(X)$ does not contain circular parts, hence, by Theorem \ref{thmden}, the equality
\[
\mathrm{Eff}(X)=\sum_{E\in \mathrm{Neg}(X)} \mathbf{R}^{\geq 0}[E]
\] 
 holds in this case. If $s$ is a birational self-map of $X$ and $f(s)$ is the automorphism induced on $N^1(X)_{\mathbf{R}}$, we observe that $f(s)$ has order at most $N!$. Indeed, $f(s)$ acts on $\mathrm{Neg}(X)$, hence an automorphism of $N^1(X)_{\mathbf{R}}$ is uniquely determined by $f(s)(\mathrm{Neg}(X))$, because in $\mathrm{Neg}(X)$ we can find a basis for the Néron-Severi space. Now we use Theorem \ref{burnthm} to conclude that $\Bir$ is finite. 
 
 Now we prove \textit{(2)} $\Rightarrow$ \textit{(1)}. As $\Bir$ is finite, $\MovX^e$ is rational polyhedral, by Theorem \ref{thmmark} and Remark \ref{Kawconconj}. Note that the equalities $\MovX^e=\mathrm{Mov}(X)=\MovX$ hold in this case. Indeed, it suffices to note that since $\MovX^e$ is rational polyhedral and Conjecture \ref{conj1} is satisfied by assumption, $\MovX^e$ is spanned by movable integral divisors. Then, as $\EffX=\MovX^{*}$ (cf.\ \cite[Lemma 2.7]{Den}), also $\EffX$ is rational polyhedral. To conclude the proof, we observe that $\EffX=\mathrm{Eff}(X)$. Indeed, since $\EffX$ is rational polyhedral, it has finitely many extremal rays, and these are spanned by effective integral divisor classes. We now prove \textit{(2)} $\Leftrightarrow$ \textit{(3)}. Let $\text{O}^+(N^1(X))$ be the group of isometries of $N^1(X)$ preserving the positive cone $\mathscr{C}_X$.  Clearly $\text{O}^+(N^1(X))$ has index $2$ in $\text{O}(N^1(X))$. Moreover, by \cite[Lemma 6.23]{Mark}, the image of $\mathrm{Mon}^2_{\mathrm{Hdg}}(X)$ via the natural map $\rho \colon \mathrm{Mon}^2_{\mathrm{Hdg}}(X) \to \text{O}^+(N^1(X))$ has finite index in $\text{O}^+(N^1(X))$ and is isomorphic to a semi-direct product of $W_{\mathrm{Exc}}$ and $\mathrm{Mon}^2_{\mathrm{Bir}}(X)/\mathrm{ker}(\rho)$. 
Consider the chain of inclusions
\begin{equation}\label{chain}
W_{\mathrm{Exc}} \subset \mathrm{Im}(\rho) \subset \text{O}^+(N^1(X))\subset \text{O}(N^1(X)),
\end{equation}
 If $\mathrm{Bir}(X)$ is finite, then  $\mathrm{Mon}^2_{\mathrm{Bir}}(X)/\mathrm{ker}(\rho)$ is finite, whence $\text{O}(N^1(X))/W_{\mathrm{Exc}}$ is finite, and this proves \textit{(2)} $\Rightarrow$ \textit{(3)}. Now, suppose that $\text{O}(N^1(X))/W_{\mathrm{Exc}}$ is finite. Then $\mathrm{Mon}^2_{\mathrm{Bir}}(X)/\mathrm{ker}(\rho)$ is finite. The kernel of $\rho$ is finite by \cite[Proposition 2.4]{Ogu}. In particular, the set of birational self-maps of $X$ acting trivially on $N^1(X)$ is finite. This implies that $\mathrm{Mon}^2_{\mathrm{Bir}}(X)$ is finite, whence $\mathrm{Bir}(X)$ is finite, and the implication \textit{(3)} $\Rightarrow$ \textit{(2)} is proven.

The implication \textit{(3)} $\Rightarrow$ \textit{(4)} is obtained by \textit{(3)} $\Rightarrow$ \textit{(2)} $\Rightarrow$ \textit{(1)} $\Rightarrow$ \textit{(4)}. Suppose now that $X$ carries a prime exceptional divisor and that its Picard number is at least $3$. 

The implication \textit{(4)} $\Rightarrow$ \textit{(1)} is nothing but the "only if" part of Corollary \ref{cor4}, thus all the statements are equivalent in this case because we also have \textit{(4)} $\Rightarrow$ \textit{(1)}$\Rightarrow$ \textit{(2)} $\Rightarrow$ \textit{(3)}.

\end{proof3}

\begin{rmk}\label{rmknefcone}
Note that, given a projective IHS manifold $X$ belonging to one of the known deformation classes, replacing $\mathrm{Bir}(X)$ with $\mathrm{Aut}(X)$ and the Kawamata-Morrison movable cone conjecture with the classical Kawamata-Morrison cone conjecture, stated for $\mathrm{Nef}^e(X)=\mathrm{Nef}(X) \cap \mathrm{Eff}(X)$ (see \cite[Theorem 1.3]{Mark1} and \cite[Section 5.2]{Amerik}), a similar argument to that given in the proof of \textit{(1)} $\Leftrightarrow$ \textit{(2)} in Corollary \ref{thm2} can be used to prove that $\mathrm{Nef}(X)$ is rational polyhedral if and only if $\mathrm{Aut}(X)$ is finite. 
\end{rmk}

We recall the following definition.
\begin{defn}\label{DefMDS}
Let $X$ be a normal and $\mathbf{Q}$-factorial projective variety. We say that $X$ is a Mori dream space (MDS) if the following properties hold:
\begin{enumerate}

\item $\mathrm{Pic}(X)$ is finitely generated (equivalently, $h^1(\mathscr{O}_X)=0$);
\item $\mathrm{Nef}(X)$ is generated by the classes of finitely many semiample divisors;
\item there is a finite collection of small $\mathbf{Q}$-factorial modifications $s_i \colon X \DashedArrow[->,densely dashed    ] X_i$, such that every $X_i$ satisfies items $(1), (2)$ and  
\[
\mathrm{Mov}(X)=\bigcup_{i} s_i^{*}(\mathrm{Nef}(X_i)).
\]
\end{enumerate}
\end{defn}

The corollary below explains when a projective IHS manifold is a MDS.

\begin{cor}
Let $X$ be a projective IHS manifold with $b_2(X)\neq 4$ and satisfying Conjecture \ref{conj1}. Then the following conditions are equivalent:
\begin{enumerate}
\item $X$ is a MDS,
\item $\mathrm{Bir}(X)$ is finite,
\item $0<\rho(X)<3$ and $\mathrm{Eff}(X)$ is rational polyhedral, or $\rho(X) \geq 3$ and $\mathrm{Neg}(X)$ is non-empty and finite.
\end{enumerate}
\begin{proof}
\textit{(1)} $\Leftrightarrow$ \textit{(2)}. Suppose that $X$ is a MDS. Then, up to automorphisms, there are finitely many small $\mathbf{Q}$-factorial modifications $X \DashedArrow[->,densely dashed] X$. Moreover, by Remark \ref{rmknefcone}, $\mathrm{Aut}(X)$ is finite, as by definition of MDS $\mathrm{Nef}(X)$ is rational polyhedral. Thus $\mathrm{Bir}(X)$ is finite. Suppose now that $\mathrm{Bir}(X)$ is finite. By \cite[Theorem 3.17]{Amerik} the BBF square of the integral, primitive, and extremal classes of the Mori cones of the birational models of $X$ is bounded, hence by \cite[Corollary 1.5]{Mark1} the number of birational models of $X$ is finite (up to isomorphism). This implies that $(3)$ in Definition \ref{DefMDS} is satisfied. The base point free Theorem, the hypothesis that $X$ verifies Conjecture \ref{conj1}, and Remark \ref{rmknefcone} imply that also $(2)$ in Definition \ref{DefMDS} is satisfied. Thus $X$ is a MDS. 

\textit{(2)} $\Leftrightarrow$ \textit{(3)}. Follows directly from Corollary \ref{thm2}.
\end{proof}
\end{cor}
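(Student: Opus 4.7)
The strategy is to establish the two equivalences (2)$\Leftrightarrow$(3) and (1)$\Leftrightarrow$(2) separately; the former reduces to earlier results in the note, while the latter requires verifying the axioms of Definition \ref{DefMDS} against the finiteness of $\mathrm{Bir}(X)$.

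For (2)$\Leftrightarrow$(3), I would do a case analysis on $\rho(X)$. When $\rho(X)=1$ both sides are trivially true. When $\rho(X)=2$, Corollary \ref{cor1} gives directly the equivalence between rationality of the boundary rays of $\overline{\mathrm{Eff}(X)}$ and finiteness of $\mathrm{Bir}(X)$. When $\rho(X)\geq 3$, Corollary \ref{thm2} supplies the equivalence between finiteness of $\mathrm{Bir}(X)$ and rational polyhedrality of $\overline{\mathrm{Eff}(X)}$; I would then invoke Theorem \ref{thmden} to rule out the case $\mathrm{Neg}(X)=\emptyset$ (which would force $\overline{\mathrm{Eff}(X)}$ to be circular, hence not rational polyhedral, contradicting finiteness of $\mathrm{Bir}(X)$), and finally the last clause of Corollary \ref{thm2} converts rational polyhedrality into finiteness of $\mathrm{Neg}(X)$.

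For (2)$\Rightarrow$(1), I would verify the three axioms of Definition \ref{DefMDS} one by one. Axiom (1) is automatic since $H^1(X,\mathscr{O}_X)=0$ for any IHS manifold. For axiom (3), the hypothesis $\mathrm{Bir}(X)$ finite combined with \cite[Theorem 5.3]{Amerik} bounds the BBF square of the primitive extremal classes of the Mori cones across all birational models of $X$; then \cite[Corollary 1.5]{Mark1} yields only finitely many birational models up to isomorphism, and the decomposition of $\overline{\mathrm{Mov}(X)}$ into nef chambers of SQMs follows from this together with Theorem \ref{thmmark}, Remark \ref{Kawconconj}, and Lemma \ref{lemconeconj}. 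For axiom (2), finiteness of $\mathrm{Bir}(X)$ in particular forces $\mathrm{Aut}(X)$ finite, so by Remark \ref{rmknefcone} the nef cone is rational polyhedral; semiampleness of its integral generators then comes from the Base Point Free Theorem on the interior (big nef classes) and from Conjecture \ref{conj1} (which holds for each of the four known deformation classes) on the isotropic part of the boundary.

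For (1)$\Rightarrow$(2), axiom (3) of MDS directly forces $\overline{\mathrm{Mov}(X)}$ to be rational polyhedral. Combining this with Markman's Theorem \ref{thmmark}, which provides a rational polyhedral fundamental domain $\Pi$ for the $\mathrm{Bir}(X)$-action on $\overline{\mathrm{Mov}(X)}^{+}$, and with the identification $\overline{\mathrm{Mov}(X)}^{+}=\overline{\mathrm{Mov}(X)}^{e}$ of Lemma \ref{lemconeconj}, one sees that $\overline{\mathrm{Mov}(X)}^{e}$ is covered by finitely many translates $g\cdot\Pi$, whence $[\mathrm{Bir}(X):\mathrm{Stab}(\Pi)]<\infty$. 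The stabilizer of a chamber is identified with the automorphism group of a birational model, which is finite by Lemma \ref{lemogu} combined with the Burnside-type Theorem \ref{burnthm}, exactly as in the proof of Corollary \ref{thm2}. I expect this last implication to be the main obstacle, since it is where the interplay between the rational polyhedrality of $\overline{\mathrm{Mov}(X)}$, the Kawamata--Morrison movable cone conjecture for IHS manifolds, and the finiteness of chamber stabilizers must be controlled simultaneously.
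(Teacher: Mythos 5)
Your proposal is correct and, for the equivalences (2)$\Leftrightarrow$(3) and (2)$\Rightarrow$(1), follows the paper's route exactly: the same reduction to Corollaries \ref{cor1} and \ref{thm2} (you spell out the $\rho(X)\leq 2$ cases and the use of Theorem \ref{thmden} to exclude $\mathrm{Neg}(X)=\emptyset$, which the paper compresses into ``follows directly from Corollary \ref{thm2}''), the same verification of axiom (3) of Definition \ref{DefMDS} via \cite[Theorem 5.3]{Amerik} and \cite[Corollary 1.5]{Mark1}, and the same verification of axiom (2) via Remark \ref{rmknefcone}, the Base Point Free Theorem and Conjecture \ref{conj1}. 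The one place you genuinely diverge is (1)$\Rightarrow$(2): the paper disposes of it with ``$\Bir$ is finite, by definition of MDS'', whereas you supply an actual argument through Markman's fundamental domain $\Pi$. Your argument is essentially sound, but the step ``$\MovX^{e}$ is covered by finitely many translates $g\cdot\Pi$'' does not follow from rational polyhedrality alone: a rational polyhedral cone can be tiled by infinitely many full-dimensional rational polyhedral subcones with pairwise disjoint interiors accumulating along a face, so one must use the covering property of the fundamental domain to exclude such accumulation. A cleaner repair, parallel to the paper's own proof of (1)$\Rightarrow$(2) in Corollary \ref{thm2}, is to note that axioms (2) and (3) of Definition \ref{DefMDS} make $\MovX$ rational polyhedral, that $\Bir$ permutes its finitely many extremal rays and hence fixes their primitive integral generators after a uniformly bounded power, so every element of $r(\Bir)$ has bounded order, and Lemma \ref{lemogu} together with Theorem \ref{burnthm} gives finiteness of $\Bir$. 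Either way the content is the same; your version has the merit of actually proving an implication the paper treats as definitional.
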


We conclude this section by proving Corollary \ref{corHoring}. For results about the existence of ample uniruled divisors on some primitive symplectic varieties see \cite{BG22} and \cite{LMP23}. 

\begin{proof4}
As $Y$ is $\mathbf{Q}$-factorial, any irreducible component of the exceptional locus of $f$ is divisorial. By Corollary \ref{cor4}, $\mathrm{card}(\mathrm{Neg}(X))\geq \rho(X)$. If $A$ is any ample Cartier divisor on $Y$, we have $\mathrm{Exc}(f)=\mathbf{B}_+(f^{*}(A))$ (cf.\ \cite[Proposition 2.3]{Pac}). In particular, by \cite[Corollary 2.15]{Den}, \cite[Proposition 2.4]{DenOrtiz}, the Gram-matrix of the irreducible components of $\mathrm{Exc}(f)$ is negative definite, and so the classes in $N^1(X)_{\mathbf{R}}$ of the irreducible components of $\mathrm{Exc}(f)$ are linearly independent. In particular, in $\mathrm{Exc}(f)$ there are at most $\rho(X)-1$ irreducible components. On the other hand, $\mathrm{card}(\mathrm{Neg}(X))\geq \rho(X)$, hence there exists a prime exceptional divisor $E$ on $X$ which is not contracted by $f$. Then $f_*(E)$ is a uniruled divisor on $Y$.
\end{proof4}

By adopting the same strategy to prove Theorem \ref{thmden}, we obtain the following version of Theorem \ref{thmden} in the singular setting.
\begin{thm}\label{thmden3}
Let $X$ be a projective $\mathbf{Q}$-factorial primitive symplectic variety with terminal singularities of Picard number at least 3. Then either $\overline{\mathrm{Eff}(X)}=\overline{\mathscr{C}_X}$, or \[
\overline{\mathrm{Eff}(X)}=\overline{\sum_E\mathbf{R}^{\geq 0}[E]},
\]
where the sum runs over the prime exceptional divisors of $X$.
\begin{proof}
All the general theory about primitive symplectic varieties that is needed to prove the theorem is contained in the papers \cite{BL22}, \cite{KMPP19}, \cite{LMP22}, \cite{LMP23}, to which we refer the reader. Below we provide an outline of the proof, following the proof of Theorem \ref{thmden} and highlighting the steps we consider most important.
If there are no prime exceptional divisors, $\overline{\mathrm{Eff}(X)}=\overline{\mathscr{C}_X}$ (here we used the singular version of \cite[Corollary 3.5]{Den}), and we are done. Now, suppose that $X$ contains a prime exceptional divisor $E$. Furthermore, suppose by contradiction that $\overline{\mathrm{Eff}(X)}$ contains a circular part $\mathcal{C}$. One can show that $\overline{\mathrm{Mov}(X)}$ is locally rational polyhedral away from the boundary of $\overline{\mathscr{C}_X}$ (more precisely, one only needs to adapt \cite[Corollary 4.8]{Den} to the singular setting). Hence, $\mathcal{C}$ must be contained in $\partial \overline{\mathrm{Mov}(X)}\cap \partial \overline{\mathscr{C}_X}$. Let $x$ be a point lying in $E^{\perp} \cap \partial \mathscr{C}_X$. Any prime exceptional divisor stays (up to a sign) stably exceptional under the action of the monodromy group $\mathrm{Mon}^{2,\mathrm{lt}}_{\mathrm{Hdg}}(X)$ (cf. \cite[Proposition 5.3, item (2)]{LMP22}). The image of $\mathrm{Mon}^{2,\mathrm{lt}}_{\mathrm{Hdg}}(X)$ in $\text{O}^+(N^1(X))$ (which we denote by $\Gamma'$) is of finite index by \cite[Lemma 6.3]{LMP22} and \cite[Theorem 1.2, item (1)]{BL22}. Moreover, the signature of the singular version of the BBF quadratic form on $N^1(X)$ is $(1,\rho(X)-1)$. Then $\mathbf{P}(\mathscr{C}_X)/\Gamma'$ is a hyperbolic manifold of finite volume (argue as in \cite[Section 3.2]{article}). We obtain a contradiction by arguing as in the smooth case and using the characterization of $\mathrm{int}(\mathrm{Mov}(X))$ provided in \cite[Proposition 5.9]{LMP22}. To show the equality $\overline{\mathrm{Eff}(X)}=\overline{\sum_E\mathbf{R}^{\geq 0}[E]}$, we argue as in the smooth case.
\end{proof}
\end{thm}

 As a consequence of the theorem above, we also obtain a more general statement for Corollary \ref{corHoring}.

\begin{cor}\label{corsingular}
Let $X$ be a projective $\mathbf{Q}$-factorial primitive symplectic variety with non-terminal singularities of Picard number at least 2. Then $X$ carries a prime exceptional (hence uniruled) divisor.
\begin{proof}
Let $X'\to X$ be a $\mathbf{Q}$-factorial terminalization of $X$, which exists by \cite[Corollary 1.4.3]{BCHM10}. Then $\rho(X')\geq 3$, and by Theorem \ref{thmden3} $X'$ carries at least $\rho(X')\geq 3$ prime exceptional divisors. To conclude the proof one argues as in the proof of Corollary \ref{corHoring}.
\end{proof}
\end{cor}

\section{Producing effective divisors}\label{Section3}

In this section, we show how to construct explicit effective integral divisors with some fixed monodromy invariants. We first need the following technical lemma, which is an adaptation of \cite[Lemma 3.1]{Kov} to our case.

\begin{lem}\label{tecnolemma}
Let $X$ be a projective IHS manifold, $D$ an integral divisor such that $[D] \in \mathscr{C}_X$ and $\overline{E}$ an integral divisor such that $0 \neq [\overline{E}] \in \partial \EffX$. Let $t:=\mathrm{div}(\overline{E})$ be the divisibility of $\overline{E}$. Consider the 2-plane $\pi:=\langle [\overline{E}],[D] \rangle \subset N^1(X)_{\mathbf{R}}$.
\begin{enumerate}[label=(\alph*)]
\item If $q_X(\overline{E})=0$ there exists a pseudo-effective class $\alpha \in \pi \cap \EffX$ represented by an integral divisor such that $q_X(\alpha)=0$ and $\alpha$ and $[\overline{E}]$ are on opposite sides of $[D]$.
\item If $q_X(\overline{E})=te<0$ there exists an integral class $\alpha$, such that $q_X(\alpha)=0$ or $q_X(\alpha)=te$ and $\alpha$ and $[\overline{E}]$ are on opposite sides of $\mathbf{R}^{\geq 0}[D]$. Furthermore, only a positive multiple of the divisor defining $\alpha$ can be effective.
\end{enumerate}
\begin{proof}
Set $d=q_X(D)$, $bt=q_X(\overline{E},D)$ and $te=q_X(\overline{E})$. Note that $bt>0$, for example by \cite[Lemma 3.1]{Mark1}.

\textit{(a)} Let $\alpha=x[D]-y[\overline{E}]$ be an element of $\pi$, for $x,y$ real numbers. Then in this case $q_X(\alpha)=dx^2-2btxy$, and by choosing $x=2bt$ and $y=d$, we obtain $\alpha=2bt[D]-d[\overline{E}]$ satisfying $q_X(\alpha)=0$. Also, $q_X(\alpha,D)=btd>0$, hence, by \cite[Lemma 3.1]{Mark1}, $\alpha$ belongs to $\overline{\mathscr{C}_X}$ and so is pseudo-effective. As $x[D]=\alpha+y[\overline{E}]$, clearly $\alpha$ and $[\overline{E}]$ are on opposite sides of $[D]$.

\textit{(b)} Let $\alpha=tx[D]-y[\overline{E}]$ be an element of $\pi$, for $x,y$ real numbers. Then 
\[
q_X(\alpha)=t^2x^2d+y^2et-2xyt^2b=te\left(\frac{tx^2d}{e}+y^2-2xyt\frac{b}{e}\right).
\] 
If we set $x'=y-\frac{xtb}{e}$, $y'=-\frac{x}{e}$ and $N=t^2b^2-tde$, we can rewrite $q_X(\alpha)$ as 
\begin{equation}\label{squarealpha}
q_X(\alpha)=te\left[\left(y-\frac{xtb}{e}\right)^2+tde\frac{x^2}{e^2}-t^2b^2\frac{x^2}{e^2}\right]=te[(x')^2-N(y')^2].
\end{equation}

 \underline{If $N$ is a square}, then $q_X(\alpha)=te\left(x'-\sqrt{N}y'\right)\left(x'+\sqrt{N}y'\right)$. Choosing $x'=N,y'=\sqrt{N}$, i.e. $x=-e\sqrt{N},y=N-\sqrt{N}tb$, we obtain the element $\alpha=-te\sqrt{N}[D]-(N-\sqrt{N}tb)[\overline{E}]$ satisfying $q_X(\alpha)=0$. We observe that $N-\sqrt{N}tb>0$. If $b=0$ or $b<0$ this is trivial. If $b>0$ and we assume $N-\sqrt{N}tb\leq 0$, we would have $N^2-N(tb)^2\leq 0$, which  would imply $N-(tb)^2 \leq 0$ and this is a contradiction, because we have $N-(tb)^2=-tde>0$.
 Also, we note that the chosen $\alpha$ belongs to $\overline{\mathscr{C}_X}$. Indeed, it suffices to pick an element $\beta \in \MovX \cap \mathscr{C}_X \cap [\overline{E}]^{\perp}$ and to observe that $q_X(\beta,\alpha)=-\sqrt{N}teq_X([D],\beta)>0$. For instance, let $D'$ be an integral divisor whose class lies in $\mathrm{int}\left(\mathrm{Mov}(X)\right)$. One can choose $\beta:=[D']-\frac{q_X(D',\overline{E})}{q_X(E)}[\overline{E}]$. Clearly $q_X(\beta,\overline{E})=0$ and 
 \[ 
 q_X(\beta)=\frac{q_X(D')q_X(\overline{E})-q_X(D',\overline{E})^2}{q_X(\overline{E})}>0
 \] 
 (for the latter inequality see for example \cite[Proposition 2.15]{Den}). Also in this case $\alpha$ and $[\overline{E}]$ are on opposite sides of $[D]$. 
 
 \underline{If $N$ is not a square}, the Pell equation 
 \begin{equation}\label{pelleqn}
 x'^2-Ny'^2=1 
 \end{equation} has infinitely many integer solutions (see \cite[Proposition 17.5.2]{Ireland}) and we may choose a solution $(x',y')$ of positive integers. Arguing as above we obtain a class 
 \begin{equation}\label{alpha}
 \alpha=-tey'[D]-(x'-tby')[\overline{E}]
 \end{equation}
 satisfying $q_X(\alpha)=te$. Also in this case we have $x'-tby'>0$. Indeed, if $b=0$ or $b<0$ this is trivial. If $b>0$ and we assume $x'-tby'\leq 0$, we would have $x'^2-(tb)^2y'^2=1-tdey'^2\leq 0$, which is clearly a contradiction. Also, as above, $\alpha$ and $[\overline{E}]$ are on opposite sides of $[D]$. Now, suppose that a multiple of the divisor defining $\alpha$ (namely $D':=-tey'D-(x'-tby')\overline{E}$) is effective. By contradiction, without loss of generality, we can assume that $-D'$ is effective. Then $-D'-tey'D=(x'-tby')\overline{E}$ is big and this is a contradiction because the latter belongs to the boundary of $\mathrm{Big}(X)$. It follows that only a positive multiple of $D'$ can be effective, and in such case the class $\alpha$ is effective.
\end{proof}
\end{lem}

\begin{center}
\begin{figure}\label{fig1}
\begin{tikzpicture}[scale=0.8]
\draw [rounded corners, fill=white!70!lightgray, thin] (0,0) rectangle (6,5);
\draw [fill] (2.5,0) circle [radius=0.05];
\draw [thick] (2.5,0) -- (6,4);
\draw [thick] (2.5,0) -- (2.5,5);
\draw [thick] (2.5,0) -- (0,4);
\node at (2.5,-0.25) {0};
\draw [fill] (2.5,4.4) circle [radius=0.05];
\draw [fill] (0.3,3.52) circle [radius=0.05];
\draw [fill] (5.3,3.2) circle [radius=0.05];
\node at (3.4,4.4) {$\scaleto{[D]\in \mathscr{C}_X}{8pt}$};
\node at (4.1,3.5) {$\scaleto{\partial\EffX \ni [\overline{E}]}{10pt}$};
\node at (0.55,3.6) {$\scaleto{\alpha}{4pt}$};
\node at (5.5,0.5) {$\scaleto{\pi}{4pt}$};
\end{tikzpicture}
\caption{Picturing Lemma \ref{tecnolemma}}
\end{figure}
\end{center}

\begin{proof5}
Let $\overline{E}$ be an integral divisor representing $\beta$, so that $[\overline{E}]=\beta$ and $[E]=k[\overline{E}]$. Also, let $D$ be any integral divisor whose class lies in $\mathscr{C}_X$. If the class $\alpha$ constructed in item \textit{(b)} of Lemma \ref{tecnolemma} with respect to $[\overline{E}]$ and $[D]$ is isotropic, then it is effective, by Theorem \ref{thmbouck} and because Conjecture \ref{conj1} holds for the known deformation classes of IHS manifolds. Suppose then that $\alpha$ is not isotropic. We will show that up to choosing a suitable solution for the Pell equation (\ref{pelleqn}), the class $\alpha$ is primitive. Then, since $[\overline{E}]$ and $\alpha$ are primitive, we will use Eichler's criterion to infer that $\alpha$ is stably exceptional, whence effective. We adopt the notation of Lemma \ref{tecnolemma}. It is natural to split the proof by distinguishing the deformation types.
\vspace{0.2cm}

$\bullet$ Suppose that $X$ is of OG10-type. By \cite[Proposition 3.1]{Mon} we can have $q_X(\overline{E})=-2$ and $\mathrm{div}(\overline{E})=1$ or $q_X(\overline{E})=-6$ and $\mathrm{div}(\overline{E})=3$. In the first case, the class $\alpha$ we get is of BBF square $-2$ and this implies that $\alpha$ is primitive. Furthermore, its divisibility must be one, because by Remark \ref{rmklattice} $\mathrm{div}(\alpha)$ divides  $|A_X|$, and the discriminant group of $X$ is $A_X\cong \mathbf{Z}/3\mathbf{Z}$ (the reader is referred to \cite{Rap} for the computation of the discriminant group of the known deformation classes). 
It follows by \cite[Proposition 3.1]{Mon} and item \textit{(b)} of Lemma \ref{tecnolemma} that the class $\alpha$ is stably exceptional, hence effective. In the second case, we have $q_X(\alpha)=-6$, and this again implies the primitivity of $\alpha$. Also, $\alpha=6y'[D]-(x'-3by')[\overline{E}]$, and for any $\gamma \in H^2(X,\mathbf{Z})$ we have $q_X(\alpha,\gamma)\equiv 0 \text{ mod } 3$. But $|A_X|=3$, hence $\mathrm{div}(\alpha)=3$. Using again \cite[Proposition 3.1]{Mon} and item \textit{(b)} of Lemma \ref{tecnolemma}, we conclude that also in this case the class $\alpha$ is effective. Moreover, $\alpha$ has the same monodromy orbit as that of $\overline{E}$, because by \cite[Theorem]{Onorati22} the monodromy of OG10-type IHS manifolds is maximal.
\vspace{0.2cm}

$\bullet$ Suppose that $X$ is of OG6-type. By \cite[Proposition 6.8]{Mon1} we have either $q_X(\overline{E})=-4$ and $\mathrm{div}(\overline{E})=2$, or $q_X(\overline{E})=-2$ and $\mathrm{div}(\overline{E})=2$. In both cases, the class $\alpha$ is primitive. Furthermore, in the first case we have $\alpha=4y'[D]-(x'-2by')[\overline{E}]$, so that $q_X(\alpha,\gamma) \equiv 0 \; \mathrm{mod} \; 2$, where $\gamma$ is any element of $H^2(X,\mathbf{Z})$. It follows that $\mathrm{div}(\alpha)\geq 2$, and as $A_X \cong \mathbf{Z}/2\mathbf{Z} \times \mathbf{Z}/2\mathbf{Z}$, by Remark \ref{rmklattice}, we conclude that $\mathrm{div}(\alpha)=2$. The same argument proves that also in the second case, the divisibility of $\alpha$ is $2$. By \cite[Proposition  6.8]{Mon1} and item \textit{(b)} of Lemma \ref{tecnolemma} we conclude that the class $\alpha$ is effective. Note also that $\alpha$ has the same monodromy orbit as that of $\overline{E}$, because by \cite[Theorem 5.5(1)]{Mon1} the monodromy of OG6-type IHS manifolds is maximal.
\vspace{0.2cm}

$\bullet$ Suppose that $X$ is of K3$^{[n]}$-type. By \cite[Theorem 1.11]{Mark2} we can have $\mathrm{div}(\overline{E})=2(n-1)$ (resp.\ $\mathrm{div}(\overline{E})=n-1$), and $q_X(\overline{E})=-2(n-1)$, or $\mathrm{div}(\overline{E})=2$ (resp.\ $\mathrm{div}(\overline{E})=1$) and $q_X(\overline{E})=-2$. 

\vspace{0.2cm}

\underline{Suppose $\mathrm{div}(\overline{E})=2(n-1)$}.
We observe that the class $\alpha$, which was defined in (\ref{alpha}), has $\mathrm{div}(\alpha)=2(n-1)$, because $\mathrm{div}(\alpha) \geq 2(n-1)$, and $q_X(\alpha)=-2(n-1)$. In particular, the class $\alpha$ is primitive, because if $\alpha=k\alpha'$ for some positive integer $k$, we have $q_X(\alpha)=q_X(\alpha,k\alpha')=2kl(n-1)$, for some integer $l$. This implies $k=1$, as $q_X(\alpha)=-2(n-1)$.
 In this case, we have 
\[
\alpha=2(n-1)y'[D]-[x'-2(n-1)by'][\overline{E}].
\] 
Now, we would like to find a solution $(x',y')$ of equation (\ref{pelleqn}) satisfying $x'-2(n-1)by' \equiv 1 \text{ mod } 2(n-1)$. We observe that $x'-2(n-1)by' \equiv x' \text{ mod } 2(n-1)$, for any solution $(x',y')$, hence it suffices to find a solution such that $x' \equiv 1 \text{ mod } 2(n-1)$. Let $(x_1,y_1)$ be the fundamental solution of equation (\ref{pelleqn}). The "second" solution of equation is $x_2=x_1^2+Ny_1^2$, $y_2=2x_1y_1$, where in this case $N=4(n-1)^2b^2-2(n-1)d$. By (\ref{pelleqn}) we have $x_1^2 \equiv 1 \text{ mod } 2(n-1)$, hence $x_2 \equiv 1 \text{ mod } 2(n-1)$, and $(x_2,y_2)$ is a solution we were looking for. 
\vspace{0.2cm}

We now prove the effectivity of $\alpha$. Recall that $q_X(\alpha)=te[(x')^2-N(y')^2]$ (see equality (\ref{squarealpha})). To do so, first observe that  $\left[\frac{\alpha}{2(n-1)}\right]=\left[-\frac{\overline{E}}{2(n-1)}\right]$ in $A_X\cong \mathbf{Z}/2(n-1)\mathbf{Z}$, hence, by using Lemma \ref{Eichlem} and by changing the sign of $-[\overline{E}]$ with the reflection $R_{\overline{E}}\in \mathrm{Mon}^2(X)$, we obtain an isometry $\iota\in \text{O}^+(H^2(X,\mathbf{Z}))$, sending $[\overline{E}]$ to $\alpha$, acting as $-1$ on $A_X$. Indeed, $R_{\overline{E}}$ acts as $-1$ on $A_X$, and the isometry given by Lemma \ref{Eichlem} acts trivially on $A_X$. By \cite[Lemma 4.2]{Mark4} (but see also \cite[Lemma 9.2]{Mark}), the isometry $\iota$ belongs to $\mathrm{Mon}^2(X)$.
Then $\alpha$ lies in the monodromy orbit of $[\overline{E}]$. Thus, by item \textit{(b)} of Lemma \ref{tecnolemma}, 
the class $\alpha$ is effective. 
\vspace{0.2cm}

\underline{Suppose $\mathrm{div}(\overline{E})=n-1$}. 
 We would like to find a solution $(x',y')$ of equation (\ref{pelleqn}) satisfying $x'-(n-1)by' \equiv 1 \text{ mod } 2(n-1)$. Also in this case the "second" solution $(x_2,y_2)$ of equation (\ref{pelleqn}) yields the conclusion. Indeed $N=(n-1)^2b^2-2(n-1)d$ in this case, and we have
\[
x_2 = x_1^2+Ny_1^2= 1+2Ny_1^2 \equiv 1 \text{ mod } 2(n-1).
\]
But $y_2$ is even, hence $x_2-(n-1)by_2 \equiv 1 \text{ mod } 2(n-1)$. Now, we observe that with respect to the solution $(x_2,y_2)$ of (\ref{pelleqn}), we have $\mathrm{div}(\alpha)=n-1$. Indeed, $\mathrm{div}(\alpha) \geq n-1$, and we can have either $\mathrm{div}(\alpha)=n-1$, or $\mathrm{div}(\alpha)= 2(n-1)$, because $q_X(\alpha)=2(n-1)$. If $N$ is even, we have that $x'$ is odd and $y'$ is even, for any solution $(x',y')$. If $N$ is odd, then we can have two possibilities: $x_1$ is odd and $y_1$ is even, or $x_1$ is even and $y_1$ is odd. In any case, we will have that $x_2$ is odd and $y_2$ is even. By definition of divisibility, there exists an element $\gamma \in H^2(X,\mathbf{Z})$ such that $q_X(\overline{E},\gamma)=n-1$, thus $q_X(\alpha, \gamma) \equiv -x_2(n-1) \text{ mod } 2(n-1)$. As $x_2$ is odd, $q_X(\alpha, \gamma)$ cannot be divided by $2(n-1)$, hence $\mathrm{div}(\alpha)=n-1$. Note that with respect to the solution $(x_2,y_2)$, the class $\alpha$ is primitive. Indeed $\mathrm{div}(\alpha)=n-1$, hence either $\alpha$ or $\alpha/2$ is primitive. But $x_2$ is odd and $[\overline{E}]$ is a primitive class, hence $\alpha$ is primitive. To show that $\alpha$ is effective in this case, we note that $[\alpha/(n-1)]=[-\overline{E}/(n-1)]$ in $A_X$, and, arguing as in the case of divisibility $2(n-1)$, we obtain an isometry $\iota \in \text{O}^+(H^2(X,\mathbf{Z}))$, sending $[\overline{E}]$ to $\alpha$, and acting as $-1$ on $A_X$. This isometry is a monodromy operator, by \cite[Lemma 4.2]{Mark4}. 
Again, since $\alpha$ lies in the monodromy orbit of $[\overline{E}]$, by item \textit{(b)} of Lemma \ref{tecnolemma} 
we conclude that $\alpha$ is effective.
\vspace{0.2cm}

\underline{Suppose $\mathrm{div}(\overline{E})=2$}. We have $\mathrm{div}(\alpha)=2$, because $\mathrm{div}(\alpha) \geq 2$, and $q_X(\alpha)=-2$. Moreover, the class $\alpha$ is primitive, since $q_X(\alpha)=-2$. By \cite[Proposition 1.8]{Mark2}, $\alpha$ and $[\overline{E}]$ lie in the same monodromy orbit, hence $\alpha$ is stably exceptional, whence effective.
\vspace{0.2cm}

\underline{Suppose $\mathrm{div}(\overline{E})=1$}. As above, the class $\alpha$ is primitive. Now we look for a solution $(x',y')$ of equation (\ref{pelleqn}) such that $\mathrm{div}(\alpha)=1$. Also in this case the "second" solution $(x_2,y_2)$ of equation (\ref{pelleqn}) gives what is wanted. Indeed, arguing as in the case $\mathrm{div}(\overline{E})=n-1$, we have that $x_2$ is odd and $y_2$ is even. By definition of divisibility, there exists an element $\gamma \in H^2(X,\mathbf{Z})$ such that $q_X(\overline{E},\gamma)=1$, thus $q_X(\alpha, \gamma) \equiv -x_2 \text{ mod } 2$. As $x_2$ is odd, $q_X(\alpha, \gamma)$ cannot be divided by $2$. Since either $\mathrm{div}(\alpha)=1$, or $\mathrm{div}(\alpha)=2$, we must have $\mathrm{div}(\alpha)=1$. Then, with respect to this choice, by \cite[Proposition 1.8]{Mark2}, $\alpha$ and $[\overline{E}]$ lie in the same monodromy orbit, hence $\alpha$ is stably exceptional, whence effective.
\vspace{0.2cm}

$\bullet$ Suppose now that $X$ is of $\mathrm{Kum}_n$-type. By \cite[Proposition 5.4]{Yosh} we can have $\mathrm{div}(\overline{E})=2(n+1)$ or $\mathrm{div}(\overline{E})=n+1$, and in both cases the BBF square of $\overline{E}$ is $-2(n+1)$. Recall that $A_X \cong \mathbf{Z}/2(n+1)\mathbf{Z}$ 
in this case. The proof goes as in the K3$^{[n]}$-type case.  Also in this case the "second" solution $(x_2,y_2)$ of equation (\ref{pelleqn}) satisfies the congruence $x_2-\mathrm{div}(\overline{E})by_2 \equiv 1 \text{ mod } 2(n+1)$, hence the classes 
\[
\alpha=2(n+1)y_2[D]-[x_2-\mathrm{div}(\overline{E})by_2][\overline{E}]
\] 
and $[\overline{E}]$ are such that $\left[\frac{\alpha}{\mathrm{div}(\overline{E})}\right]=\left[-\frac{\overline{E}}{\mathrm{div}(\overline{E})}\right]$ in $A_X$. Arguing as in the K3$^{[n]}$-type case, we conclude that $\mathrm{div}(\alpha)=\mathrm{div}(\overline{E})$ \footnote{Note that if $\mathrm{div}(\overline{E})=2(n+1)$, then $\mathrm{div}(\alpha)=\mathrm{div}(\overline{E})$ for any solution of (\ref{pelleqn}). Whereas, if $\mathrm{div}(\overline{E})=n+1$, then $\mathrm{div}(\alpha)=\mathrm{div}(\overline{E})$ when we choose the second solution $(x_2,y_2)$ of (\ref{pelleqn}).}, and $\alpha$ is primitive (with respect to the solution $(x_2,y_2)$). Again, arguing exactly as in the K3$^{[n]}$-type case, we obtain an isometry $\iota \in{O}^{+}(H^2(X,\mathbf{Z}))$ of determinant $-1$, acting as $-1$ on $A_X$, and sending $[\overline{E}]$ to $\alpha$. By Markman's and Mongardi's characterization of $\mathrm{Mon}^2(X)$ (cf. \cite[Theorem 1.4]{Mark3} and \cite[Theorem 2.3]{Mon2}), we conclude that the isometry $\iota$ lies in $\mathrm{Mon}^2(X)$. 
Then the class $\alpha$ lies in the monodromy orbit of $[\overline{E}]$, hence is effective by item \textit{(b)} of Lemma \ref{tecnolemma}.

\end{proof5}

We now show how the computations of this section allow us to prove Theorem \ref{thmden} for the known deformation classes of IHS manifolds, without the use of hyperbolic geometry. The proof follows Kovács' and Huybrechts' strategies (see \cite[Section 8]{Huy2} for the proof of Kovács' result by Huybrechts).

\begin{alternativeproof}
We show that $\EffX$ does not contain circular parts when $\mathrm{Neg}(X)\neq \emptyset$. The proof that
\[
\overline{\mathrm{Eff}(X)}=\overline{\sum_E\mathbf{R}^{\geq 0}[E]}
\]
is the same as that contained in the proof of Theorem \ref{thmden}. Our strategy is as follows. Let $E$ be a prime exceptional divisor and, as above, let $\overline{E}$ be an integral divisor whose class is primitive and such that $[E]$ is a positive multiple of $[\overline{E}]$. Firstly, assuming that $\EffX$ contains a circular part $\mathcal{C}$, we show that there exists a rational isotropic ray $R$ in $\mathcal{C}$. Secondly, using the class $[\overline{E}]$ and an integral generator $\alpha$ of $R$, we construct effective classes $\alpha_k$ with $q_X(\alpha_k)<0$, such that the sequence of rays $\{\mathbf{R}^{> 0}\alpha_k\}_k$ converges to $R=\mathbf{R}^{> 0}\alpha$, for $k\to +\infty$. Since the BBF square of the $\alpha_k$ is negative, we will reach a contradiction. 
\vspace{0.2cm}

Set $q_X(\overline{E})=te$, where $\mathrm{div}(\overline{E})=t$, and $e$ is a negative integer. Assume by contradiction that $\EffX$ has a circular part $\mathcal{C}$. We can assume that $\mathcal{C}=\mathbf{R}^{> 0}\mathcal{C}$. Moreover, as we have already explained in the proof of Theorem \ref{thmden}, we have $\mathcal{C} \subset \partial \overline{\mathscr{C}_X} \cap \partial \EffX$. Then there exists a neighborhood $U$ of $\mathcal{C}$ in $\EffX$, such that $q_X(\beta)\geq 0$, for any $\beta \in U$.
\vspace{0.2cm}

Let $D$ be any integral divisor with $[D] \in \mathscr{C}_X$. By Proposition \ref{thmden2}, on the opposite side of $[D]$ with respect to $[\overline{E}]$, there exists an effective integral class $\alpha$ that is of BBF square $0$, or of BBF square $te$. When the class $[D]$ approaches $\mathcal{C}$, we can only have $q_X(\alpha)=0$. In particular, the circular part $\mathcal{C}$ contains an effective integral isotropic class $\alpha$. Now, consider the class $\alpha_{\epsilon}=(1-\epsilon)\alpha+\epsilon [\overline{E}]$, where $\epsilon > 0$ is rational. Clearly, for $\epsilon$ small enough, the class $\alpha_{\epsilon}$ is effective. Thus we have $q_X(\alpha,\overline{E})>0$, as otherwise we would have \[
q_X(\alpha_{\epsilon})=-2\epsilon^2q_X(\alpha,\overline{E})+2\epsilon q_X(\alpha,\overline{E})+\epsilon^2te<0,\]
 contradicting $\alpha \in \partial{\EffX} \cap \partial \overline{\mathscr{C}_X}$. 
\vspace{0.2cm}
 
 Since by assumption $\rho(X)\geq 3$, we can find an integral class $\alpha'$ of negative BBF square belonging to $\left(\mathbf{R} \alpha \oplus \mathbf{R} [\overline{E}]\right)^{\perp}$. We now define
\begin{equation}\label{eqn1}
\alpha_k:=-2k^2q_X(\alpha')\left(q_X(\alpha,\overline{E})\right)^3\alpha-2k\left(q_X(\alpha,\overline{E})\right)^2\alpha'+[\overline{E}].
\end{equation}
An easy computation shows that $q_X(\alpha_k)=q_X(\overline{E})=te$ and that, for $k \gg 0$, we have $q_X(\alpha_k,[A])>0$, where $A$ is any ample divisor. We want to show that for $k$ large enough the classes $\alpha_k$ are effective. First of all, for all known deformation classes of IHS manifolds, we observe that $\mathrm{div}(\alpha_k)=\mathrm{div}(\overline{E})$. Indeed, by construction $\mathrm{div}(\alpha_k)\geq \mathrm{div}(\overline{E})$. Moreover, by the explicit description of the primitive stably exceptional classes for the known deformation classes of IHS manifolds, since $q_X(\alpha_k)=q_X(\overline{E})$, we have either $q_X(\alpha_k)=-\mathrm{div}(\overline{E})$, or $q_X(\alpha_k)=-2 \cdot \mathrm{div}(\overline{E})$. In particular, we have either $\mathrm{div}(\alpha_k)=\mathrm{div}(\overline{E})$, or $\mathrm{div}(\alpha_k)=2\cdot \mathrm{div}(\overline{E})$. To exclude the second case, pick an element $\gamma$ such that $q_X(\overline{E},\gamma)=\mathrm{div}(\overline{E})$. Then $q_X(\alpha_k,\gamma)$ is divisible by $\mathrm{div}(\overline{E})$, but not by $2\cdot \mathrm{div}(\overline{E})$, hence $\mathrm{div}(\alpha_k)=\mathrm{div}(\overline{E})$.
Also, we observe that the classes $\alpha_k$ are primitive, independently of the deformation type we have chosen. Indeed, $[\overline{E}], \alpha$ and $\alpha'$ are linearly independent in $N^1(X)_{\mathbf{R}}$, and the class $[\overline{E}]$ is primitive. If $q_X(\alpha_k)=-\mathrm{div}(\overline{E})$, we are done, otherwise $q_X(\alpha_k)=-2\cdot \mathrm{div}(\overline{E})$, and either $\alpha_k$ or $\alpha_k/2$ is primitive. The primitivity of $[\overline{E}]$ forces $\alpha_k$ to be primitive.
We can now conclude that for $k$ large enough the classes $\alpha_k$ are effective. To do so, we distinguish the different deformation types.
\vspace{0.2cm}

$\bullet$ Suppose that $X$ is of OG10-type. By \cite[Proposition 3.1]{Mon} we have either $q_X(\overline{E})=-2$ and $\mathrm{div}(\overline{E})=1$, or $q_X(\overline{E})=-6$ and $\mathrm{div}(\overline{E})=3$. By \cite[Proposition 3.1]{Mon}, for $k$ large enough, the classes $\alpha_k$ are stably exceptional, hence effective. 
\vspace{0.2cm}

$\bullet$  Suppose that $X$ is of OG6-type. By \cite[Proposition 6.8]{Mon1} we have either $q_X(\overline{E})=-4$ and $\mathrm{div}(\overline{E})=2$, or $q_X(\overline{E})=-2$ and $\mathrm{div}(\overline{E})=2$. By \cite[Proposition 6.8]{Mon1}, if $k$ is large enough, the classes $\alpha_k$ are stably exceptional, hence effective.
\vspace{0.2cm}

$\bullet$ Suppose that $X$ is of K3$^{[n]}$-type. By \cite[Theorem 1.11]{Mark2}, we have $\mathrm{div}(\overline{E})=2(n-1)$ (resp.\ $\mathrm{div}(\overline{E})=n-1$), and $q_X(\overline{E})=-2(n-1)$, or $\mathrm{div}(\overline{E})=2$ (resp.\ $\mathrm{div}(\overline{E})=1$) and $q_X(\overline{E})=-2$. If $q_X(\overline{E})=-2(n-1)$, we observe that $[\alpha_k/\mathrm{div}(\alpha_k)]=[\overline{E}/\mathrm{div}(\overline{E})]$ in $A_X$. Hence, by Lemma \ref{Eichlem}, we obtain an isometry $\iota \in \widetilde{\text{SO}}^+(H^2(X,\mathbf{Z}))$ sending $[\overline{E}]$ to $\alpha_k$. In particular, $\iota\in \mathrm{Mon}^2(X)$, by \cite[Lemma 4.2]{Mark4}. Thus we conclude that the classes $\alpha_k$ are stably exceptional and hence effective for $k$ large enough. If $q_X(\overline{E})=-2$, we conclude by \cite[Proposition 1.8]{Mark2} that the classes $\alpha_k$ are stably exceptional and hence effective, for $k$ large enough.
\vspace{0.2cm}

$\bullet$ Suppose that $X$ is of $\mathrm{Kum}_n$-type. By \cite[Proposition 5.4]{Yosh}, we have $\mathrm{div}(\overline{E})=2(n+1)$, or $\mathrm{div}(\overline{E})=n+1$, and in both cases the BBF square of $\overline{E}$ is $-2(n+1)$. We observe that $[\alpha_k/\mathrm{div}(\alpha_k)]=[\overline{E}/\mathrm{div}(\overline{E})]$ in $A_X$. By Lemma \ref{Eichlem}, we obtain an isometry $\iota \in \widetilde{\text{SO}}^+(H^2(X,\mathbf{Z}))$ sending $[\overline{E}]$ to $\alpha_k$. By Markman's and Mongardi's characterization of $\mathrm{Mon}^2(X)$ (cf. \cite[Theorem 1.4]{Mark3} and \cite[Theorem 2.3]{Mon2}), we conclude that the isometry $\iota$ lies in $\mathrm{Mon}^2(X)$. Then the classes $\alpha_k$ are stably exceptional and hence effective, for $k$ large enough.
\vspace{0.2cm}

Now, dividing both members of (\ref{eqn1}) by $2k^2$ we see that the sequence of rays $\left\{\mathbf{R}^{>0}\alpha_k\right\}_k$ converges to $\mathbf{R}^{>0}\alpha$ and this contradicts the circularity of $\EffX$ at $\gamma$ (in particular the fact that locally around $\gamma$ in $\overline{\mathrm{Eff}(X)}$ the BBF form is nonnegative).
\end{alternativeproof}

\begin{rmk}\label{effectivedivisors}
We observe that if in Proposition \ref{thmden2} we are in the first situation, using the classes $\alpha_k$ defined in equation (\ref{eqn1}), we can construct explicit effective divisors with the same monodromy orbit as that of a given prime exceptional divisor $E$. 
\end{rmk}

\printbibliography

\end{document}